\newcommand\precdot{\mathrel{\ooalign{$\prec$\cr
  \hidewidth\raise0.001ex\hbox{$\cdot\mkern0.6mu$}\cr}}}
\newtheorem{theorem}{Theorem}[section]
\newtheorem{def-prop}[theorem]{Definition-Proposition}
\newtheorem{prop}[theorem]{Proposition}
\newtheorem{conj}[theorem]{Conjecture}
\newtheorem{lemma}[theorem]{Lemma}
\newtheorem{cor}[theorem]{Corollary}
\theoremstyle{definition}
\newtheorem{ex}[theorem]{Example}
\newtheorem{defin}[theorem]{Definition}
\theoremstyle{remark}
\newtheorem*{remark}{Remark}
\DeclareMathOperator{\conv}{Conv}
\DeclareMathOperator{\cay}{Cay}
\DeclareMathOperator{\ac}{AC}
\DeclareMathOperator{\id}{id}
\newcommand{\Z}{\mathbb{Z}}
\newcommand{\rev}{\normalfont{rev}}
\newcommand{\R}{\mathbb{R}}
\renewcommand{\S}{\mathfrak{S}}
\newcommand{\LL}{\mathcal{L}}
\newcommand{\B}{\mathcal{B}}
\title{The hull metric on Coxeter groups}
\author{Christian Gaetz}
\thanks{C.G. is supported by a National Science Foundation Postdoctoral Research Fellowship under Grant No. DMS-2103121.}
\address{Department of Mathematics, Harvard University, Cambridge, MA 02138}
\email{\href{mailto:crgaetz@gmail.com}{{\tt crgaetz@gmail.com}}}
\author{Yibo Gao}
\address{Department of Mathematics, Massachusetts Institute of Technology, Cambridge, MA 02139}
\email{\href{mailto:gaoyibo@mit.edu}{{\tt gaoyibo@mit.edu}}}
\date{\today}
\begin{document}
\begin{abstract}
We reinterpret an inequality, due originally to Sidorenko, for linear extensions of posets in terms of \emph{convex subsets} of the symmetric group $\S_n$. We conjecture that the analogous inequalities hold in arbitrary (not-necessarily-finite) Coxeter groups $W$, and prove this for the hyperoctahedral groups $B_n$ and all right-angled Coxeter groups. Our proof for $B_n$ (and new proof for $\S_n$) use a combinatorial \emph{insertion map} closely related to the well-studied \emph{promotion} operator on linear extensions; this map may be of independent interest. 

We also note that the inequalities in question can be interpreted as a triangle inequalities, so that convex hulls can be used to define a new invariant metric on $W$ whenever our conjecture holds. Geometric properties of this metric are an interesting direction for future research.  
\end{abstract}

\maketitle

\section{Introduction}
Let $G$ be a connected undirected graph with vertices $V(G)$ and let
\[
d:V(G)\times V(G)\rightarrow\Z_{\geq0}
\]
denote the shortest-path distance between vertices, clearly a metric on $V(G)$. A subset $C \subseteq V(G)$ is \emph{convex} if no shortest path between two vertices from $C$ leaves $C$: whenever $u,v \in C$ and $d(u,w)+d(w,v)=d(u,v)$ we must have $w \in C$. For $X \subseteq V(G)$, the \textit{convex hull} $\conv(X)$ is the intersection  (itself clearly convex) of all convex sets containing $X$.

The following definition, which to our knowledge has not been considered before, is central to this work.

\begin{defin}
\label{def:metric-property}
We say a graph $G$ has the \textit{hull property} if for every $u,v,w\in V(G)$,
\begin{equation}
\label{eq:hull-property}
\left|\conv(u,v)\right|\cdot \left|\conv(v,w)\right|\geq \left|\conv(u,w)\right|.
\end{equation}
Equivalently, $G$ has the hull property if and only if
\[
\log \left|\conv(-,-)\right|:V(G)\times V(G)\rightarrow\R_{\geq0}
\]
is a metric on $V(G)$.
It has the \textit{strong hull property} if for every $u,v,w\in V(G)$,
\begin{equation}
\label{eq:strong-hull}
\left|\conv(u,v)\right|\cdot\left|\conv(v,w)\right|\geq\left|\conv(u,v,w)\right|.
\end{equation}
\end{defin}

\subsection{Main results}
For a Coxeter group $W$ we write $\cay(W)$ for the undirected right Cayley graph for $W$ with the generating set of simple reflections (see Section~\ref{sec:coxeter-background}).

\begin{conj}
\label{conj:main-conjecture}
Let $W$ be any Coxeter group, then $\cay(W)$ has the strong hull property.
\end{conj}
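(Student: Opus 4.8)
The plan is to prove the conjecture by constructing, for each $u,v,w$, an explicit injection
\[
\Phi:\conv(u,v,w)\hookrightarrow\conv(u,v)\times\conv(v,w).
\]
As a first step, note that left translation $g\mapsto hg$ is an automorphism of $\cay(W)$ preserving shortest paths, hence convex hulls; applying $g\mapsto v^{-1}g$ we may assume $v=\id$, and writing $a:=v^{-1}u$ and $b:=v^{-1}w$ the task becomes $\left|\conv(\id,a)\right|\cdot\left|\conv(\id,b)\right|\geq\left|\conv(a,\id,b)\right|$. Here a crucial simplification is available: an element $z$ lies on a geodesic from $\id$ to $a$ exactly when $\ell(z)+\ell(z^{-1}a)=\ell(a)$, i.e. when $z\leq a$ in right weak order, and since right weak order intervals are convex in $\cay(W)$ this geodesic interval is already convex. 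Thus $\conv(\id,a)=[\id,a]_R$ and $\conv(\id,b)=[\id,b]_R$, so it suffices to inject $\conv(a,\id,b)$ into $[\id,a]_R\times[\id,b]_R$.

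To define $\Phi$ I would factor each $z\in\conv(a,\id,b)$ into an ``$a$-part'' and a ``$b$-part''. Let $f(z):=z\wedge a$ be the meet in right weak order, which should be the nearest-point projection (gate) of $z$ onto the convex set $[\id,a]_R$, and set $r(z):=f(z)^{-1}z$, so that $z=f(z)\,r(z)$ and the assignment $z\mapsto(f(z),r(z))$ is injective with $f(z)\in[\id,a]_R$. It then remains to show that the remainders $r(z)$, as $z$ ranges over those elements of $\conv(a,\id,b)$ sharing a fixed value of $f(z)$, can be injected into $[\id,b]_R$: morally, the ``excess'' of $z$ beyond its shadow in $[\id,a]_R$ is forced to resemble an element of $[\id,b]_R$ because $z$ was reachable by geodesics emanating from $a$, $\id$, and $b$. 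Turning this heuristic into a genuine injection is the crux.

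In practice a cleaner route — and the one the insertion map of this paper realizes for $W=\S_n$ and $W=B_n$ — is to induct on $\ell(a)+\ell(b)$. If $\ell(a)>0$, pick a right descent $s$ of $a$, set $a':=as$ (so $a'\lessdot a$ and $\conv(a',\id,b)\subseteq\conv(a,\id,b)$), and prove the local estimate that replacing $a'$ by $a$ cannot enlarge the triple hull faster than it enlarges the interval:
\[
\left|\conv(a,\id,b)\right|\cdot\left|[\id,a']_R\right|\;\leq\;\left|\conv(a',\id,b)\right|\cdot\left|[\id,a]_R\right|.
\]
Granting this, the conjecture follows by induction, with base case $a=\id$ amounting to $\left|[\id,b]_R\right|\geq\left|\conv(\id,\id,b)\right|=\left|[\id,b]_R\right|$. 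The role of the insertion map is precisely to prove the displayed estimate in types $A$ and $B$: it matches each element of $\conv(a,\id,b)\setminus\conv(a',\id,b)$ with a pair consisting of an element of $[\id,a]_R\setminus[\id,a']_R$ together with an element of $\conv(a',\id,b)$, by inserting the wall-crossing of $s$ into a geodesic configuration — a move directly modeled on promotion of linear extensions, which is why the $\S_n$ case recovers Sidorenko's inequality.

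The single serious obstacle is proving the local estimate (equivalently, producing $\Phi$) uniformly over all Coxeter groups. For $\S_n$ and $B_n$ one has explicit reduced-word combinatorics together with a promotion-type insertion; for right-angled $W$ the median / $\mathrm{CAT}(0)$ cube complex structure makes every convex set an intersection of gated half-spaces, which the separate argument exploits. But a general $\cay(W)$ is neither a median graph nor carries an obvious promotion operator, and convex subsets of $\cay(W)$ admit no clean uniform description: they need not be weak order intervals, nor intersections of half-spaces. I therefore expect that settling the general conjecture will require a new ingredient — perhaps routing geodesics through the $\mathrm{CAT}(0)$ Davis complex, a parabolic induction combined with a folding/matching argument, or a type-uniform combinatorial model of convex hulls in $\cay(W)$ — and that, until such an idea appears, the results of this paper will remain the principal evidence for it.
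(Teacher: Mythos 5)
The statement you are asked about is Conjecture~\ref{conj:main-conjecture}, and the paper does not prove it in general: it establishes only the cases $W=\S_n$, $W=B_n$, and $W$ right-angled (plus computer checks for $G_2,H_3,D_4,F_4$). You correctly recognize this and do not claim a complete proof, which is the right conclusion. Your preliminary reductions are sound and match the paper's: left translation is an automorphism of $\cay(W)$, so one may take $v=\id$ (this is exactly the first line of the proof of Corollary~\ref{cor:typeA}), and $\conv(\id,a)=[\id,a]_R$ is Proposition~\ref{prop:convex-hull-from-inversions}.

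The substantive gaps in your sketch are the following. First, the assertion that $z\mapsto z\land_R a$ is the gate of $z$ onto $[\id,a]_R$ and that the ``remainder'' $r(z)$ can be injected into $[\id,b]_R$ is unsupported; in a general Coxeter group $T_L(x\land_R y)$ need not equal $T_L(x)\cap T_L(y)$ (Proposition~\ref{prop:RA-meet-is-intersection} is special to the right-angled case, and its failure elsewhere is precisely why the paper's right-angled argument does not generalize). Second, your proposed inductive scheme misdescribes what the paper actually does: the insertion-map proofs in types $A$ and $B$ do \emph{not} induct on $\ell(a)+\ell(b)$ by peeling off one right descent at a time, but rather induct on the rank $n$ by removing a maximal element $p_a$ of the triple inversion poset $P_{\{\id,w,wu\}}$ (equivalently, deleting a value from the permutations), with injectivity coming from the promotion-chain characterization in Lemma~\ref{lem:promotion}. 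Your displayed ``local estimate''
\[
\left|\conv(a,\id,b)\right|\cdot\left|[\id,a']_R\right|\;\leq\;\left|\conv(a',\id,b)\right|\cdot\left|[\id,a]_R\right|
\]
would indeed telescope to the conjecture, but it is a new, stronger log-concavity-type statement that is proven nowhere, neither by you nor in the paper, and should not be attributed to the insertion map. Since the conjecture itself remains open, the honest summary is: your reductions are correct, your proposed mechanisms are plausible heuristics but each hinges on an unproven key lemma, and no proof of the general statement exists in the paper for your attempt to be measured against.
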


In addition to the infinite families of groups for which we prove Conjecture~\ref{conj:main-conjecture} below, we have also verified the conjecture by computer for the finite Coxeter groups of types $G_2, H_3, D_4,$ and $F_4$.

It would be an interesting direction for future research to study whether Conjecture~\ref{conj:main-conjecture} might be extended to arbitrary Tits buildings.

To see the strength of Conjecture~\ref{conj:main-conjecture}, consider the following very special case:

\begin{ex}
Let $W=\S_n$ be the symmetric group, with the adjacent transpositions as simple generators. Let $w \in \S_n$ be any permutation and $w^{\rev}$ the reversed permutation (identifying permutations with their one-line notation $w(1) \ldots w(n)$). Then it follows from observations of Bj\"{o}rner--Wachs \cite{Bjorner-Wachs-gen-quo} that $\left| \conv(\id,w) \right| = \left| [\id,w]_R \right|$ is the number of linear extensions of the corresponding \emph{2-dimensional poset} $P_w$, while $\left| \conv(\id,w^{\rev}) \right|$ is the number of linear extensions of the \emph{complement} $\overline{P_w}=P_{w^{\rev}}$, and furthermore that any 2-dimensional poset is isomorphic to some $P_w$.  Since $\conv(w,w^{\rev})=\S_n$, the hull property implies that for any 2-dimensional poset $P$:
\begin{equation}
\label{eq:sidorenko}
e(P)e(\overline{P}) \geq n!
\end{equation}
where $e(P)$ denotes the number of linear extensions, a result due originally to Sidorenko \cite{Sidorenko}.
\end{ex}

Sidorenko's original proof \cite{Sidorenko} of (\ref{eq:sidorenko}) relied on intricate max-flow min-cut arguments, but the inequality (\ref{eq:sidorenko}) has since been shown to be related to several different topics and techniques in convex geometry and combinatorics. Bollob\'{a}s--Brightwell--Sidorenko \cite{BBS} later gave a convex geometric proof using a known special case of the still-open Mahler Conjecture. In \cite{Second-separable-paper, First-separable-paper} the authors gave a strengthening and new proof related to the algebraic notion of \emph{generalized quotients} \cite{Bjorner-Wachs-gen-quo} in Coxeter groups.  

Theorem~\ref{thm:types-A-and-B} below extends Sidorenko's inequality to all pairs of elements (not just $w,w^{\rev}$) and to the hyperoctahedral group (equivalently, to certain signed posets).  Our proof relies on a new combinatorial \emph{insertion map} for linear extensions (see Definition~\ref{def:insertion-map}), closely related to \emph{promotion} \cite{Schutzenberger}. This combinatorial proof of Sidorenko's inequality gives a new solution to an open problem of Morales--Pak--Panova \cite{Morales-Pak-Panova} different from that in \cite{Second-separable-paper}. In the case of the symmetric group, a combinatorial proof similar to our proof in Section~\ref{sec:permutations} has been obtained independently by Chan--Pak--Panova \cite{Chan-Pak-Panova-alternative}.

\begin{theorem}
\label{thm:types-A-and-B}
Conjecture~\ref{conj:main-conjecture} holds when $W$ is the symmetric group $\S_n$ or hyperoctahedral group $B_n$.
\end{theorem}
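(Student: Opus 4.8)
The plan is to reduce the statement to the case $v=\id$, translate it into an inequality about numbers of linear extensions, and then prove that inequality by an explicit insertion map related to promotion.

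\emph{Step 1: reduction to $v=\id$.} Left multiplication by a fixed group element is an automorphism of $\cay(W)$, so replacing $(u,v,w)$ by $(v^{-1}u,\id,v^{-1}w)$ and writing $p=v^{-1}u$, $r=v^{-1}w$, it suffices to prove $|\conv(\id,p)|\cdot|\conv(\id,r)|\geq|\conv(\id,p,r)|$ for all $p,r\in W$. Here I use that $u$ lies on a geodesic from $\id$ to $p$ (equivalently from $p$ to $\id$) exactly when $\ell(u)+\ell(p^{-1}u)=\ell(p)$, i.e. $u\leq_R p$, so $\conv(\id,p)=[\id,p]_R$.

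\emph{Step 2: the dictionary.} For a reflection $t$ let $H_t^{+}=\{x:\ell(tx)>\ell(x)\}$, the half-space containing $\id$. A geodesic crosses each wall at most once, so each $H_t^{+}$ is convex, and hence $\conv(\id,x_1,\dots,x_k)=\bigcap_{t\,:\,t\notin N(x_i)\ \forall i}H_t^{+}=\{u:N(u)\subseteq N(x_1)\cup\dots\cup N(x_k)\}$, where $N(x)=\{t:\ell(tx)<\ell(x)\}$ is the (left) inversion set. In particular $\conv(\id,p)=[\id,p]_R$, which by the Björner--Wachs correspondence has size $e(P_p)$, the number of linear extensions of the $2$-dimensional poset $P_p$ (in type $A$; the signed poset $P_p$ in type $B$). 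Dually, the complement of $N(x)$ is the order relation of $P_x$, so the complement of $N(p)\cup N(r)$ is the order relation of $P_p\cap P_r$, giving $|\conv(\id,p,r)|=e(P_p\cap P_r)$. Thus the theorem reduces to the combinatorial inequality
\[
e(P_p)\cdot e(P_r)\ \geq\ e(P_p\cap P_r),
\]
which I will deduce from the injection constructed below; in fact the argument proves $e(P)\,e(P')\geq e(P\cap P')$ whenever $P,P'$ are (signed) posets on $[n]$ each refined by the natural linear order.

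\emph{Step 3: the insertion map (type $A$).} It suffices to construct an injection $\LL(P\cap P')\hookrightarrow\LL(P)\times\LL(P')$. Given $\sigma\in\LL(P\cap P')$, every pair $i<_P j$ occurring in the ``wrong'' order in $\sigma$ must be incomparable in $P'$, since otherwise the pair lies in $P\cap P'$, which $\sigma$ respects. One then applies a sequence of local slides --- a close relative of Schützenberger promotion --- that successively repair these $P$-violations, producing some $\tau\in\LL(P)$ while recording the slide data; the two key claims are that the record can be read off as an element of $\LL(P')$ and that $\sigma$ is recovered from the pair $(\tau,\text{record})$, so that injectivity follows from reversibility of the procedure. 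Taking $P'=\overline{P_w}$ recovers Sidorenko's inequality $e(P)e(\overline P)\geq n!$, and a similar type-$A$ argument was obtained independently by Chan--Pak--Panova.

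\emph{Step 4: type $B$.} The same skeleton applies, using $B_n$-inversion sets and signed posets; the new ingredient is a signed insertion map in which entries may slide ``through $0$'' and change sign. This signed map is the main obstacle: one must verify that the slides terminate, that they repair every violation of the signed order $P$, and --- the delicate point --- that the recorded data still forms a linear extension of $P'$ once sign changes are permitted. Granting this, the injection $\LL(P\cap P')\hookrightarrow\LL(P)\times\LL(P')$ proceeds exactly as in Step 3, which together with Steps 1--2 establishes Conjecture~\ref{conj:main-conjecture} for both $\S_n$ and $B_n$.
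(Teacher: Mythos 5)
Your Steps 1 and 2 are correct and coincide with the paper's setup: translating so that the middle vertex of the triple is the identity, and using Propositions~\ref{prop:convex-hull-from-inversions} and~\ref{prop:generalized-BW-lin-ext} to identify $|\conv(\id,p)|$, $|\conv(\id,r)|$, $|\conv(\id,p,r)|$ with $e(P_p)$, $e(P_r)$, $e(P_p\cap P_r)$; your remark that only a common linear refinement (not two-dimensionality) is needed is also right. The problem is that Steps 3 and 4, which contain all of the actual content of the theorem, are not proofs. You never define the sliding procedure, and the two statements on which everything rests --- that the ``slide record'' can be read off as an element of $\LL(P')$ and that $\sigma$ is recoverable from the pair --- are simply asserted; in type $B$ you explicitly write ``granting this.'' These are precisely the hard points, and it is not at all clear that the architecture you propose (one output linear extension of $P$ plus a record of the slides that produced it) can be made to work as stated.

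The paper's injection has a different shape: both coordinates are obtained by running the \emph{same} insertion map on the permutation $\pi_\lambda$, once into each poset, i.e.\ $\lambda\mapsto\bigl(f_{P_p}(\pi_\lambda),\,f_{P_r}(\pi_\lambda)\bigr)$; neither coordinate is a ``record'' of the other. Injectivity is then proved by induction on $n$: the element receiving the top value in $\lambda$ must lie on the promotion chains of \emph{both} output extensions (Lemma~\ref{lem:promotion}), and if two distinct maximal elements $p_a,p_{a'}$ of $P_p\cap P_r$ both had this property they would be comparable in $P_p$ and in $P_r$, hence --- because both posets refine the same linear order, so the comparabilities point the same way --- comparable in $P_p\cap P_r$, contradicting maximality. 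This comparability argument is the crux missing from your sketch, and it is exactly the point that fails in type $D$ (as the paper remarks), so ``granting'' its signed analogue skips the essential difficulty. To complete your proof you would need to (i) specify the insertion/sliding algorithm precisely, including the paired moves on $p_a,p_{\bar a}$ in type $B$, and (ii) supply an actual injectivity argument such as the promotion-chain uniqueness statement above.
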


We also prove Conjecture~\ref{conj:main-conjecture} for the large class of \emph{right-angled} Coxeter groups using very different methods. Heuristically, these groups are on the opposite extreme among Coxeter groups from the finite groups covered in Theorem~\ref{thm:types-A-and-B}, as the non-commuting products $s_is_j$ have infinite order, as opposed to the small, finite orders seen in finite Coxeter groups. In addition, right-angled Coxeter groups are a natural case in which to verify our conjectured metric because of their ubiquity in geometric group theory (see, for example, the survey \cite{pallavi}).

\begin{theorem}
\label{thm:right-angled-case}
Conjecture~\ref{conj:main-conjecture} holds when $W$ is any right-angled Coxeter group.
\end{theorem}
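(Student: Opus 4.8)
The plan is to exploit a feature special to right-angled Coxeter groups: when $W$ is right-angled, the Davis complex of $(W,S)$ is a CAT(0) cube complex whose $1$-skeleton is exactly $\cay(W)$, so that $\cay(W)$ is a \emph{median graph}. In fact the argument below establishes the strong hull property for the $1$-skeleton of \emph{any} CAT(0) cube complex, of which Theorem~\ref{thm:right-angled-case} is the case at hand. (This is consistent with the fact that a genuinely different argument is needed for Theorem~\ref{thm:types-A-and-B}: already $\cay(\S_3)$ is a $6$-cycle, which is not a median graph.)

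I will use three standard facts about a median graph $\Gamma$, phrased via its \emph{hyperplanes} and the two complementary, convex \emph{halfspaces} that each hyperplane bounds:
\begin{enumerate}[label=(\roman*)]
\item for any set $S$ of vertices, $\conv(S)$ is the intersection of all halfspaces containing $S$; equivalently, $x\notin\conv(S)$ if and only if some hyperplane separates $x$ from all of $S$;
\item every convex subgraph $C$ is \emph{gated}: each vertex $x$ has a unique nearest point $\pi_C(x)\in C$, and for every hyperplane $H$ the vertex $\pi_C(x)$ lies on the same side of $H$ as $x$ unless $H$ separates $x$ from $C$, in which case $\pi_C(x)$ lies on the side of $H$ containing $C$;
\item two vertices lying on the same side of every hyperplane are equal.
\end{enumerate}

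Fix $u,v,w\in W$ and set $A:=\conv(u,v)$ and $B:=\conv(v,w)$, both convex, hence gated. The heart of the proof is to show that the map
\[
\Phi\colon\conv(u,v,w)\longrightarrow A\times B,\qquad \Phi(x):=\bigl(\pi_A(x),\,\pi_B(x)\bigr)
\]
is injective; since $|A|=|\conv(u,v)|$ and $|B|=|\conv(v,w)|$, this gives $|\conv(u,v,w)|\le|\conv(u,v)|\cdot|\conv(v,w)|$, which is exactly \eqref{eq:strong-hull}. To prove injectivity, suppose $\Phi(x)=\Phi(y)$ and, for contradiction, that some hyperplane $H$ separates $x$ and $y$. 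If $H$ separated neither $x$ nor $y$ from $A$, then by (ii) the points $\pi_A(x)$ and $\pi_A(y)$ would lie on the sides of $H$ containing $x$ and $y$ respectively, which are opposite sides, contradicting $\pi_A(x)=\pi_A(y)$. Hence $H$ separates one of $x,y$ from $A$, and in particular from $\{u,v\}$; the same reasoning with $B$ in place of $A$ shows $H$ separates one of $x,y$ from $\{v,w\}$. If the same vertex $z\in\{x,y\}$ occurs in both statements, then $H$ separates $z$ from $\{u,v,w\}$, contradicting $z\in\conv(u,v,w)$ by (i). Otherwise one of them, say $x$, is separated from $\{u,v\}$ and the other, $y$, from $\{v,w\}$; then $v$ lies on the side of $H$ not containing $x$ and also on the side not containing $y$, which is impossible since $H$ separates $x$ from $y$. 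Therefore no hyperplane separates $x$ and $y$, and $x=y$ by (iii).

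I expect the only real work to be expository: recording (i)--(iii) in the precise forms above, especially the hyperplane-by-hyperplane behaviour of the gate $\pi_C$, and checking that they apply verbatim to the Davis complex, which is locally finite and on which $\conv$ of a finite vertex set is automatically finite, so that the cardinalities above are well defined even when $W$ is infinite. The one new idea is to take $\Phi$ to be the \emph{pair of gate projections} onto $\conv(u,v)$ and $\conv(v,w)$; with that choice injectivity reduces to the elementary hyperplane bookkeeping above, and no metric estimates are needed. The conceptual hurdle is thus simply to recognize that the right setting is CAT(0) cube complexes and the right map is this pair of projections, after which the proof is short.
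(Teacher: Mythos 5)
Your proof is correct, and it reaches the same inequality via what is, on the surface, a genuinely different route. The paper stays inside Coxeter combinatorics: it first proves (Proposition~\ref{prop:RA-meet-is-intersection}, using Brink's description of centralizers of reflections in right-angled groups) that $T_L(u\land_R v)=T_L(u)\cap T_L(v)$, then reduces by translation to the triple $(\id,u,v)$ and uses the injection $x\mapsto(x\land_R u,\,x\land_R v)$ into $[\id,u]_R\times[\id,v]_R$, where injectivity is immediate because a subset of $T_L(u)\cup T_L(v)$ is determined by its intersections with $T_L(u)$ and $T_L(v)$. Your map is in fact the same map in disguise: the gate projection of $x$ onto $[\id,u]_R$ is exactly the element with inversion set $T_L(x)\cap T_L(u)$, i.e.\ $x\land_R u$, and your hyperplane bookkeeping is the wall-by-wall version of the set-reconstruction argument (walls of the Davis complex correspond to reflections, and the side of the wall for $t$ records whether $t\in T_L(x)$). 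What your version buys is generality: it establishes the strong hull property for the $1$-skeleton of any CAT(0) cube complex (equivalently any median graph), with the right-angled case following from the standard fact that the Davis complex of a RACG is such a complex; the cost is importing that machinery, whereas the paper's argument is self-contained modulo Proposition~\ref{prop:RA-meet-is-intersection}. Your facts (i)--(iii) are all standard and correctly stated, the case analysis in the injectivity argument is complete, and the finiteness remarks are not even needed since an injection suffices for the cardinality inequality; so there is no gap, only the expository work you already flag of citing the median-graph background.
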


For both Theorem~\ref{thm:types-A-and-B} and Theorem~\ref{thm:right-angled-case} we give explicit injections realizing the inequality (\ref{eq:strong-hull}), see Theorems~\ref{thm:typeA}, \ref{thm:typeB}, and \ref{thm:right-angled-has-metric}.

It is natural to ask when the inequality (\ref{eq:hull-property}) is in fact an equality. In the case of a triple $(\id,w,ww_0)$ in a finite Weyl group $W$ with longest element $w_0$, equality is achieved when $w$ is \emph{separable} (see \cite{Second-separable-paper, First-separable-paper}) and conjecturally only in this case. For the symmetric group, this corresponds to the case where $P_w$ is a series-parallel poset.

\subsection{Outline}

Section~\ref{sec:coxeter-background} covers needed background on Coxeter groups, convex subsets thereof, and the weak order.  Section~\ref{sec:permutations} proves Conjecture~\ref{conj:main-conjecture} in the case $W=\S_n$ using a new combinatorial \emph{insertion map} for linear extensions; this argument is extended in Section~\ref{sec:signed} to prove the conjecture for the hyperoctahedral group $B_n$.

Section~\ref{sec:right-angled} proves some structural properties of inversion sets in right-angled Coxeter groups and uses these to give an explicit injective proof of Conjecture~\ref{conj:main-conjecture} when $W$ is any right-angled Coxeter group.

Finally, in Section~\ref{sec:graphical}, we demonstrate that the (strong) hull property is a rather special property of graphs by proving that the graph of regions of a graphical hyperplane arrangement has the hull property if and only if it is in fact the reflection arrangement for $\S_n$, in which case the property follows from Theorem~\ref{thm:types-A-and-B}.

\section{Background on Coxeter groups}
\label{sec:coxeter-background}
The material in this section is well known and may be found, for example, in Bj\"{o}rner--Brenti \cite{Bjorner-Brenti}.

\subsection{Coxeter groups and weak order}
A \emph{Coxeter group} is a group $W$ together with a distinguished generating set $S=\{s_1,\ldots,s_r\}$ subject to relations $s_i^2=\id$ for $i=1,\ldots,r$ and $(s_is_j)^{m_{ij}}=\id$ for $i\neq j \in \{1, \ldots, r\}$ for some numbers $m_{ij} \in \{2,3,\ldots\} \sqcup \{\infty\}$.  The $r \times r$ matrix with entries $m_{ij}$ is the \emph{Coxeter matrix}. The elements of $S$ are called \emph{simple reflections}, while their $W$-conjugates are called \emph{reflections}; the set of reflections is denoted $T$.  Given an element $w \in W$, an expression
\[
w=s_{i_1}\cdots s_{i_{\ell}}
\]
of minimal length is called a \emph{reduced word} for $w$, and in this case $\ell=\ell(w)$ is the \emph{length} of $w$.

The \emph{(right) weak order} $(W, \leq_R)$ has cover relations $w \lessdot_R ws$ whenever $\ell(ws)=\ell(w)+1$ and $s \in S$.  A reflection $t \in T$ is a \emph{(left) inversion} of $w$ if $\ell(tw)<\ell(w)$, and the set of inversions of $w$ is denoted $T_L(w)$.  The following facts are well known:
\begin{prop}
\label{prop:weak-order-from-inversions}
For $u,v \in W$ we have $u \leq_R v$ if and only if $T_L(u) \subseteq T_L(v)$.
\end{prop}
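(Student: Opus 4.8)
The plan is to deduce both implications from the standard description of the left inversion set in terms of a reduced word. Recall (see Björner--Brenti \cite{Bjorner-Brenti}, Ch.~4) that if $w=s_{i_1}\cdots s_{i_\ell}$ is reduced then
\[
T_L(w)=\{\,s_{i_1}\cdots s_{i_{k-1}}\,s_{i_k}\,s_{i_{k-1}}\cdots s_{i_1}\ :\ 1\leq k\leq \ell\,\},
\]
that these $\ell$ reflections are pairwise distinct, so $|T_L(w)|=\ell(w)$, and that, by concatenating reduced words, if $v=xy$ with $\ell(v)=\ell(x)+\ell(y)$ then $T_L(v)=T_L(x)\sqcup x\,T_L(y)\,x^{-1}$. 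I would also use the elementary fact that $x\leq_R v$ is equivalent to the existence of a factorization $v=xy$ with $\ell(v)=\ell(x)+\ell(y)$, which one reads off from a saturated chain in weak order.

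For the forward implication, given $u\leq_R v$ write $v=uy$ with $\ell(v)=\ell(u)+\ell(y)$; the concatenation formula then immediately gives $T_L(u)\subseteq T_L(v)$. (Along a saturated chain each cover relation $w\lessdot_R ws$ adjoins exactly one new reflection to the left inversion set, which is the same statement.)

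For the converse I would induct on $\ell(u)$, the case $u=\id$ being trivial. For the inductive step, choose a right descent $s$ of $u$ and set $x=us$, so that $\ell(x)=\ell(u)-1$, $T_L(x)\subseteq T_L(u)\subseteq T_L(v)$, and $T_L(u)=T_L(x)\sqcup\{r\}$ with $r=xsx^{-1}$. By induction $x\leq_R v$, say $v=xy$ with $\ell(v)=\ell(x)+\ell(y)$. Since $r\in T_L(v)\setminus T_L(x)=x\,T_L(y)\,x^{-1}$, we get $s\in T_L(y)$, so $y$ has a reduced word beginning with $s$, say $y=sy'$ with $\ell(y')=\ell(y)-1$. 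Then $v=x s y'=uy'$ with $\ell(v)=\ell(x)+1+\ell(y')=\ell(u)+\ell(y')$, hence $u\leq_R v$.

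The only delicate point is this last step: one cannot simply ``add the descent $s$ back'' to the relation $x\leq_R v$, since in general $\ell(xs)>\ell(x)$ and $x\leq_R v$ do not together imply $xs\leq_R v$. The crux is the observation that the unique reflection witnessing $T_L(x)\subsetneq T_L(u)$ must occur in the ``$y$-part'' $x\,T_L(y)\,x^{-1}$ of $T_L(v)$ under the concatenation decomposition, and this is exactly what allows one to peel an $s$ off the front of $y$ and recover $u\leq_R v$.
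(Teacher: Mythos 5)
Your proof is correct. Note that the paper does not actually prove this proposition; it is listed among the ``well known'' background facts and implicitly referred to Bj\"{o}rner--Brenti, so there is no in-paper argument to compare against. Your derivation---the forward direction from the concatenation formula $T_L(xy)=T_L(x)\sqcup xT_L(y)x^{-1}$ for length-additive factorizations, and the converse by induction on $\ell(u)$, peeling off a right descent $s$ of $u$ and using the fact that the extra reflection $xsx^{-1}$ must land in the $xT_L(y)x^{-1}$ part to pull $s$ to the front of $y$---is the standard textbook proof, and you correctly identify and handle the one genuinely delicate step (that $x\leq_R v$ and $\ell(xs)>\ell(x)$ alone do not give $xs\leq_R v$).
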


\begin{prop}
\label{prop:inversions-from-prefixes}
Let $w=s_{i_1}\cdots s_{i_{\ell}}$ be any reduced word for $w \in W$, then 
\[
T_L(w)=\{s_{i_1}s_{i_2}\cdots s_{i_{\ell'-1}} s_{i_{\ell'}} s_{i_{\ell'-1}} \cdots s_{i_2}s_{i_1} \: | \: 1 \leq \ell' \leq \ell\}.
\]
\end{prop}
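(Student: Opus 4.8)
The plan is to establish the two inclusions directly; abbreviate $t_k := s_{i_1}\cdots s_{i_{k-1}} s_{i_k} s_{i_{k-1}}\cdots s_{i_1}$ for $1\le k\le\ell$, so that the right-hand side of the displayed equality is $\{t_1,\dots,t_\ell\}$. Each $t_k$ lies in $T$, since $t_k = (s_{i_1}\cdots s_{i_{k-1}})\,s_{i_k}\,(s_{i_1}\cdots s_{i_{k-1}})^{-1}$ is a $W$-conjugate of a simple reflection.

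For the inclusion $\{t_1,\dots,t_\ell\}\subseteq T_L(w)$, I would just multiply $t_k$ by $w=(s_{i_1}\cdots s_{i_{k-1}})\,s_{i_k}\,(s_{i_{k+1}}\cdots s_{i_\ell})$: the block $s_{i_k}(s_{i_{k-1}}\cdots s_{i_1})(s_{i_1}\cdots s_{i_{k-1}})s_{i_k}$ of $2k$ consecutive letters appearing in $t_k w$ cancels to $\id$, so $t_k w = s_{i_1}\cdots s_{i_{k-1}}\,s_{i_{k+1}}\cdots s_{i_\ell}$ is an expression of length at most $\ell-1$; hence $\ell(t_k w)<\ell(w)$ and $t_k\in T_L(w)$. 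For the reverse inclusion, let $t\in T_L(w)$, so $\ell(tw)<\ell(w)$; by the Strong Exchange Condition applied to the reduced word $s_{i_1}\cdots s_{i_\ell}$ and the reflection $t$, there is an index $k$ with $tw = s_{i_1}\cdots s_{i_{k-1}}\,s_{i_{k+1}}\cdots s_{i_\ell}$. Computing $t=(tw)\,w^{-1}$ and cancelling the block $(s_{i_{k+1}}\cdots s_{i_\ell})(s_{i_\ell}\cdots s_{i_{k+1}})=\id$ leaves $t = s_{i_1}\cdots s_{i_{k-1}}\,s_{i_k}\,s_{i_{k-1}}\cdots s_{i_1} = t_k$. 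Together the two inclusions give the claim.

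The one nontrivial ingredient is the Strong Exchange Condition, which is standard (see \cite{Bjorner-Brenti}); so the main obstacle here is really just a matter of what one is willing to cite. If one wants to avoid citing it, one can instead induct on $\ell(w)$ using the transformation rule $T_L(v)=\{s\}\sqcup s\,T_L(sv)\,s$ for $s\in S$ with $\ell(sv)<\ell(v)$, applied with $v=w$ and $s=s_{i_1}$ together with the inductive hypothesis for the shorter reduced word $s_{i_2}\cdots s_{i_\ell}$, but proving that rule in turn comes down to the same exchange-type input. A short additional argument (using that a consecutive factor of a reduced word is again reduced) shows the $t_k$ are pairwise distinct, so one also obtains $|T_L(w)|=\ell(w)$; this refinement is not needed for the statement, however.
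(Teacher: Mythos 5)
Your proof is correct. The paper states this proposition as background without proof (citing Bj\"{o}rner--Brenti), and your argument --- direct verification that each $t_k$ shortens $w$, plus the Strong Exchange Condition for the reverse inclusion --- is precisely the standard textbook derivation that the cited reference supplies. Both computations (the cancellation showing $t_kw=s_{i_1}\cdots s_{i_{k-1}}s_{i_{k+1}}\cdots s_{i_\ell}$, and the recovery $t=(tw)w^{-1}=t_k$ from the exchange conclusion) check out.
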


\begin{theorem}[Bj\"{o}rner \cite{Bjorner-survey}]
The weak order on any Coxeter group $W$ is a meet-semilattice.
\end{theorem}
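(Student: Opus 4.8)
The plan is to phrase everything in terms of inversion sets. By Proposition~\ref{prop:weak-order-from-inversions}, the map $w\mapsto T_L(w)$ is an isomorphism from $(W,\leq_R)$ onto the poset of inversion sets ordered by inclusion, with bottom element $\id$ (whose inversion set is $\emptyset$). A poset with a bottom element in which every subset possessing an upper bound has a \emph{least} upper bound is automatically a complete meet-semilattice, since the meet of a family equals the join of the set of its common lower bounds (which is nonempty and bounded above by any member of the family). So the first reduction is to show: whenever $X\subseteq W$ has a common upper bound, the join $\bigvee X$ exists in $(W,\leq_R)$.

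For this I would use the root-theoretic description of inversion sets. Identifying each reflection $t\in T$ with its positive root $\beta_t$, the inversion sets are exactly the \emph{finite biconvex} subsets $A$ of the positive system $\Phi^+$, where $A$ is biconvex when both $A$ and $\Phi^+\setminus A$ are \emph{closed}, meaning: if $\alpha,\beta$ lie in the set and $c\alpha+d\beta\in\Phi^+$ for some $c,d\ge 0$, then $c\alpha+d\beta$ lies in the set. (Closedness of $T_L(w)$ and of its complement follows from Proposition~\ref{prop:inversions-from-prefixes} together with the combinatorics of the root system.) Given $X$ with common upper bound $z$, I would put $A:=\bigcup_{x\in X}T_L(x)$, a finite subset of $T_L(z)$, and let $\overline A$ be the smallest closed subset of $\Phi^+$ containing $A$ --- this exists because intersections of closed sets are closed, and $\overline A\subseteq T_L(z)$ since $T_L(z)$ is closed. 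The crux would be to show that $\overline A$ is in fact biconvex, hence finite and so equal to $T_L(y)$ for some $y\in W$. Granting this, $y=\bigvee X$: each $x\in X$ has $T_L(x)\subseteq\overline A=T_L(y)$, so $x\leq_R y$; and any upper bound $z'$ of $X$ has $A\subseteq T_L(z')$, whence $\overline A\subseteq T_L(z')$ by closedness of the latter, i.e.\ $y\leq_R z'$. This would finish the proof.

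The main obstacle is exactly the claim that $\overline A$ is biconvex: closedness holds by construction, so the real content is \emph{co-closedness}, and this genuinely uses that $A$ lies inside an inversion set --- without such a hypothesis the closure can be infinite (for instance $\overline{\{\beta_s,\beta_t\}}$ in the infinite dihedral group, reflecting the absence of a common upper bound for $s$ and $t$ there). The route I would take is induction on $\ell(z)$: choose a simple reflection $s$ that is a left inversion of $z$, use the identity $T_L(z)=\{s\}\sqcup\{sts : t\in T_L(sz)\}$ to transport the problem to a subproblem supported inside $T_L(sz)$ via conjugation by $s$, apply the inductive hypothesis, and then check that co-closedness survives re-inserting the simple reflection $s$; an equivalent route fixes a reflection ordering of $\Phi^+$ refining a reduced word of $z$ and uses the subword description of the interval $[\id,z]_R$. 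Controlling co-closedness as $s$ is passed across is the only delicate point; everything else is formal. The details are due to Bj\"{o}rner~\cite{Bjorner-survey}.
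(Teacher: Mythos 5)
This theorem appears in the paper only as quoted background, with no proof given --- the authors simply cite Bj\"{o}rner --- so there is no in-paper argument to compare yours against. Judged on its own terms, your first paragraph is complete and correct: the reduction of arbitrary nonempty meets to joins of bounded-above subsets, via ``meet equals join of the set of common lower bounds'' in a poset with bottom element $\id$, is the standard and fully rigorous opening move, and the translation into inversion sets via Proposition~\ref{prop:weak-order-from-inversions} is sound. Your infinite dihedral example is also correct and correctly diagnoses why the hypothesis $A\subseteq T_L(z)$ is indispensable.

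The remainder, however, is a roadmap rather than a proof, and the gap sits exactly where you say it does. Two substantial facts are invoked but not established. First, the characterization of inversion sets as the finite biconvex subsets of $\Phi^+$: the direction you actually need --- that every finite closed and co-closed subset equals $T_L(y)$ for some $y$ --- is a nontrivial theorem of roughly the same depth as the statement being proved (and for general, non-crystallographic root systems one must take the cone version of closure, with $c\alpha+d\beta$ for $c,d>0$, which you do gesture at). Second, the co-closedness of the closure $\overline{A}$ when $A$ lies inside an inversion set: for this you offer only a plan (induction on $\ell(z)$, conjugating by a left descent of $z$, and ``checking that co-closedness survives''), and you explicitly defer the details to Bj\"{o}rner. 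Identifying the hard step is not the same as carrying it out, so as a self-contained argument this does not yet prove the theorem. For comparison, the proof most commonly given in the literature (e.g.\ Bj\"{o}rner--Brenti, Theorem 3.2.1) avoids root closures entirely and establishes bounded joins directly by induction on the length of an upper bound $z$, splitting off a common left descent via the lifting property; your biconvexity route is a genuine alternative, but as written it trades the theorem to be proved for two other unproved ones.
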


We denote the meet operation in $(W,\leq_R)$ by $\land_R$.

\subsection{Convexity in Coxeter groups}
We say a subset $C \subseteq W$ is \emph{convex} if it is convex in the Cayley graph $\cay(W,S)$ (the Hasse diagram of weak order).  For $D \subseteq A \subseteq T$, we write 
\[
W(D,A) \coloneqq \{w \in W \: | \: D \subseteq T_L(w) \subseteq A\}.
\]

\begin{theorem}[Tits \cite{Tits}]
\label{thm:tits-convexity}
A set $C \subseteq W$ is convex if and only if $C=W(D,A)$ for some $D \subseteq A \subseteq T$.
\end{theorem}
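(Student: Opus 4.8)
The plan is to derive both directions from one elementary identity: for all $u,v\in W$,
\[
d(u,v)=|T_L(u)\mathbin{\triangle}T_L(v)|.
\]
This is standard (it follows from Proposition~\ref{prop:inversions-from-prefixes} by reading a geodesic off a reduced word for $u^{-1}v$ and noting that the reflections crossed along it are distinct $u$-conjugates of the inversions of $u^{-1}v$, or see \cite{Bjorner-Brenti}). Combined with the equality case of the coordinatewise triangle inequality for Hamming distance, it yields the \emph{betweenness criterion}: for $u,v,z\in W$,
\[
d(u,z)+d(z,v)=d(u,v)\iff T_L(u)\cap T_L(v)\subseteq T_L(z)\subseteq T_L(u)\cup T_L(v).
\]
The entire proof is organized around this criterion. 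For the \emph{if} direction, the criterion immediately shows that each half-space $H_t^{+}=\{w:t\in T_L(w)\}$ and its complement $H_t^{-}=\{w:t\notin T_L(w)\}$ is convex: if $u,v\in H_t^{+}$ and $z$ satisfies $d(u,z)+d(z,v)=d(u,v)$, then $t\in T_L(u)\cap T_L(v)\subseteq T_L(z)$, so $z\in H_t^{+}$, and dually for $H_t^{-}$. Since $W(D,A)=\bigcap_{t\in D}H_t^{+}\cap\bigcap_{t\in T\setminus A}H_t^{-}$ is an intersection of convex sets, it is convex.

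For the \emph{only if} direction, let $C$ be convex and nonempty, and set $D=\bigcap_{c\in C}T_L(c)$ and $A=\bigcup_{c\in C}T_L(c)$, so that $C\subseteq W(D,A)$; the goal is equality. Suppose $W(D,A)\setminus C\neq\emptyset$ and choose $w$ in it minimizing $k:=d(w,C)\geq 1$; let $c^{*}\in C$ be a closest point to $w$ and fix a geodesic $w=x_0,x_1,\dots,x_k=c^{*}$. If $k\geq 2$: both $w$ and $c^{*}$ lie in $W(D,A)$, which is convex by the \emph{if} direction, so the intermediate vertex $x_1$ lies in $W(D,A)$ as well; but $d(x_1,C)\leq k-1<k$, so minimality of $w$ forces $x_1\in C$, whence $d(w,C)\leq 1$, contradicting $k\geq 2$. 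If $k=1$: then $c^{*}=ws$ for some $s\in S$, and by Proposition~\ref{prop:inversions-from-prefixes} the sets $T_L(w)$ and $T_L(c^{*})$ differ in exactly one reflection $t_1:=wsw^{-1}$. If $t_1\in T_L(w)$ then $t_1\in A$, so some $c'\in C$ has $t_1\in T_L(c')$; if instead $t_1\notin T_L(w)$ then $t_1\notin D$, so some $c'\in C$ has $t_1\notin T_L(c')$. In either case a direct check gives $T_L(c^{*})\cap T_L(c')\subseteq T_L(w)\subseteq T_L(c^{*})\cup T_L(c')$, so by the betweenness criterion $w$ lies on a geodesic between $c^{*}$ and $c'$; since $c^{*},c'\in C$ and $C$ is convex, $w\in C$, a contradiction. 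Hence $C=W(D,A)$, and since these $D,A$ are clearly forced, no other representation is possible.

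I expect the case $k=1$ to be the only genuine obstacle. An element $w$ lying just outside $C$ need not lie on any geodesic between two points of $C$, so nothing purely formal "reaches" it; the point is that the hypothesis $w\in W(D,A)$ is precisely what supplies a point $c'\in C$ on the far side of the wall $t_1$, and that $c'$ must be chosen according to whether $t_1$ is being deleted from or added to the inversion set in passing from $w$ to $c^{*}$ — this is what makes the sandwiching inequality $T_L(c^{*})\cap T_L(c')\subseteq T_L(w)\subseteq T_L(c^{*})\cup T_L(c')$ go through. The $k\geq 2$ reduction and the \emph{if} direction, by contrast, are formal consequences of the betweenness criterion once the identity $d(u,v)=|T_L(u)\mathbin{\triangle}T_L(v)|$ is in hand.
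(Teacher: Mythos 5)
The paper states this result as Tits' theorem and gives no proof of it, so there is no internal argument to compare against; judged on its own, your proof is correct. The identity $d(u,v)=\left|T_L(u)\bigtriangleup T_L(v)\right|$ and the resulting betweenness criterion are standard and correctly derived (the criterion is precisely the two-point case of the inversion-set description of convex hulls that the paper quotes as Proposition~\ref{prop:convex-hull-from-inversions}), and they do render the ``if'' direction and the $k\geq 2$ reduction formal, exactly as you say. The $k=1$ case is the real content and it checks out: since $T_L(w)=T_L(c^{*})\bigtriangleup\{t_1\}$ with $t_1=wsw^{-1}$, the hypothesis $D\subseteq T_L(w)\subseteq A$ produces a witness $c'\in C$ agreeing with $w$ across the wall $t_1$, and the sandwiching $T_L(c^{*})\cap T_L(c')\subseteq T_L(w)\subseteq T_L(c^{*})\cup T_L(c')$ is immediate in both subcases. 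Two minor loose ends. First, the statement as written nominally includes $C=\emptyset$, which your argument excludes; in fact $\emptyset$ need not be of the form $W(D,A)$ with $D\subseteq A$ (e.g.\ for $W=A_1$ every $W(D,A)$ is nonempty), so this is a defect of the standard phrasing of Tits' theorem rather than of your proof, but it is worth a sentence. Second, your closing remark that $D$ and $A$ are ``forced'' overreaches --- the representation $C=W(D,A)$ is generally not unique (one may enlarge $A$ by reflections that no element of $W$ can acquire without acquiring others outside $A$) --- but the theorem asserts only existence, so nothing is lost.
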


\begin{prop}[Bj\"{o}rner and Wachs \cite{Bjorner-Wachs-gen-quo}]
\label{prop:convex-hull-from-inversions}
Let $x_1, \ldots, x_d \in W$, then 
\[
\conv(x_1, \ldots, x_d) =\left\{ w \in W \; \middle| \; \bigcap_{i=1}^d T_L(x_i) \subseteq T_L(w) \subseteq \bigcup_{i=1}^d T_L(x_i)\right\}.
\]
In particular, $\conv(\id,x)=[e,x]_R$ for all $x \in W$.
\end{prop}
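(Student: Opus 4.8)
The plan is to derive the identity directly from Tits' convexity theorem (Theorem~\ref{thm:tits-convexity}) together with the inversion-set description of weak order (Proposition~\ref{prop:weak-order-from-inversions}), with essentially no extra combinatorial input. Set $D \coloneqq \bigcap_{i=1}^d T_L(x_i)$ and $A \coloneqq \bigcup_{i=1}^d T_L(x_i)$, so the right-hand side of the claimed formula is exactly the set $W(D,A)$. I would prove the two inclusions separately, each time exploiting that $\conv(x_1,\ldots,x_d)$ is by definition the intersection of all convex sets containing $\{x_1,\ldots,x_d\}$.

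For $\conv(x_1,\ldots,x_d) \subseteq W(D,A)$: the set $W(D,A)$ is convex by Tits' theorem, and $D \subseteq T_L(x_i) \subseteq A$ for each $i$ by construction, so $W(D,A)$ is one of the convex sets whose intersection defines the hull; hence the hull is contained in it.

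For the reverse inclusion, I would take an arbitrary convex set $C$ with $\{x_1,\ldots,x_d\} \subseteq C$ and show $W(D,A) \subseteq C$. By Tits' theorem, $C = W(D',A')$ for some $D' \subseteq A' \subseteq T$; the membership $x_i \in C$ forces $D' \subseteq T_L(x_i) \subseteq A'$ for every $i$, hence $D' \subseteq D$ and $A' \supseteq A$. Since $W(\cdot,\cdot)$ is monotone in the obvious sense — shrinking the lower set and enlarging the upper set can only add elements to $W(D,A)$ — this gives $W(D,A) \subseteq W(D',A') = C$. Intersecting over all such $C$ yields $W(D,A) \subseteq \conv(x_1,\ldots,x_d)$. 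The ``in particular'' statement is then immediate: taking $x_1 = \id$ and $x_2 = x$, we have $T_L(\id) = \emptyset$, so $D = \emptyset$ and $A = T_L(x)$, and the formula becomes $\conv(\id,x) = \{w \in W : T_L(w) \subseteq T_L(x)\}$, which equals $[e,x]_R$ by Proposition~\ref{prop:weak-order-from-inversions}.

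The honest answer to which step is the main obstacle is that there isn't one, provided Tits' theorem is granted: the argument is pure bookkeeping with the extremal characterization of $\conv$. If instead one wanted a proof not invoking Theorem~\ref{thm:tits-convexity}, the work would all migrate into establishing that the ``sandwich'' condition $D \subseteq T_L(w) \subseteq A$ is closed under passing to elements lying on geodesics in $\cay(W,S)$ — equivalently, that every $W(D,A)$ is convex — and this is precisely where one would need to understand how $T_L$ changes along weak-order covers, using Proposition~\ref{prop:inversions-from-prefixes}. Since Tits' theorem is available to us, we sidestep this entirely.
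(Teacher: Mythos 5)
Your argument is correct: the paper cites this result from Bj\"{o}rner--Wachs without proof, and your derivation --- using Tits' theorem (Theorem~\ref{thm:tits-convexity}) for both inclusions, together with the monotonicity of $W(\cdot,\cdot)$ and Proposition~\ref{prop:weak-order-from-inversions} for the ``in particular'' clause --- is the standard and complete way to obtain it from the background results already quoted in Section~\ref{sec:coxeter-background}. Nothing is missing.
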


\subsection{Parabolic subgroups and quotients}
For $J \subseteq S$, the \emph{parabolic subgroup} $W_J$ is the subgroup of $W$ generated by $J$ (viewed as a Coxeter group with simple generators $J$).

\begin{prop}
\label{prop:parabolic-quotient-facts}
Let $J \subseteq S$, then:
\begin{itemize}
    \item Each left coset $wW_J$ of $W_J$ in $W$ has a unique element of minimal length, denoted $w^J$.  The set $\{w^J \: | \: w \in W\}$ is called the \emph{parabolic quotient} and is denoted $W^J$.
    \item Each $w \in W$ can be uniquely written $w=w^J w_J$ with $w^J \in W^J$ and $w_J \in W_J$.  This decomposition satisfies $\ell(w)=\ell(w^J)+\ell(w_J)$.
    \item Therefore, we have $w^J \leq_R u$ for all $u \in wW_J$.
\end{itemize}
\end{prop}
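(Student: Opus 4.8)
The plan is to push essentially all of the work into a single length-additivity lemma, phrased in terms of inversions already available in the excerpt, and then read off the three bullets. Call $x \in W$ \emph{$J$-reduced} if $\ell(xs) > \ell(x)$ for every $s \in J$. The lemma I would isolate is: \emph{if $x$ is $J$-reduced, then $\ell(xv) = \ell(x) + \ell(v)$ for every $v \in W_J$}; here $\ell(v)$ is the length in $W$, which agrees with the intrinsic length of $v$ in the Coxeter system $(W_J, J)$ by the deletion condition, since a product of letters from $J$ that is reduced inside $W_J$ cannot be shortened in $W$. Granting this lemma, the first bullet is quick: a coset $wW_J$ has a minimal-length element since lengths lie in $\Z_{\geq 0}$; any such element $x$ is $J$-reduced (otherwise $xs$ is shorter and still in the coset); and if $x, x' \in wW_J$ are both minimal then $x' = xz$ for some $z \in W_J$, and the lemma forces $\ell(x') = \ell(x) + \ell(z)$, hence $\ell(z) = 0$ and $x = x'$. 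Thus the minimal element $w^J$ is unique, and $W^J = \{w^J : w \in W\}$ is exactly the set of $J$-reduced elements (again by the lemma, since $\ell(xv) \geq \ell(x)$ for all $v \in W_J$ precisely when $x$ is $J$-reduced).

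The remaining two bullets follow formally. For the second, put $w_J := (w^J)^{-1} w \in W_J$, so $w = w^J w_J$ and the additivity $\ell(w) = \ell(w^J) + \ell(w_J)$ is the lemma; and if $w = uv$ with $u$ $J$-reduced and $v \in W_J$, then $uW_J = wW_J$, so $u = w^J$ by uniqueness of the minimal element and then $v = w_J$. For the third bullet, let $u \in wW_J$; then $u = w^J v$ with $v \in W_J$ and, by the lemma, $\ell(u) = \ell(w^J) + \ell(v)$, so concatenating a reduced word for $w^J$ with one for $v$ yields a reduced word for $u$ having a reduced word for $w^J$ as a prefix. Proposition~\ref{prop:inversions-from-prefixes} then gives $T_L(w^J) \subseteq T_L(u)$, and Proposition~\ref{prop:weak-order-from-inversions} gives $w^J \leq_R u$.

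To prove the lemma I would induct on $\ell(v)$, the case $v = \id$ being vacuous. For $\ell(v) \geq 1$, choose $s \in J$ with $\ell(vs) < \ell(v)$ and write $v = v's$ with $\ell(v') = \ell(v) - 1$; by the inductive hypothesis $\ell(xv') = \ell(x) + \ell(v')$, so concatenating reduced words produces a reduced word $s_{i_1} \cdots s_{i_p}\, t_1 \cdots t_q$ for $xv'$, where $s_{i_1}\cdots s_{i_p}$ is reduced for $x$ and $t_1 \cdots t_q$ is reduced for $v'$ with $t_1, \ldots, t_q \in J$. Suppose, for contradiction, that $\ell(xv) = \ell(xv's) < \ell(xv')$. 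The exchange condition expresses $xv's$ as this word with one letter deleted. Deleting some $t_k$ would exhibit $v = v's$ as a product of only $q-1 < \ell(v)$ generators, which is impossible; so the deleted letter is some $s_{i_j}$, and rearranging gives $xr = s_{i_1}\cdots \widehat{s_{i_j}} \cdots s_{i_p}$, where $r := v's\,v'^{-1} \in T \cap W_J$ is a reflection and $\ell(xr) < \ell(x)$.

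The step I expect to be the genuine obstacle is ruling out this last possibility: I need that a $J$-reduced element $x$ has \emph{no} reflection of $W_J$ among its right inversions (reflections $r$ with $\ell(xr) < \ell(x)$), not merely no simple one. The cleanest resolution I know is to pass to the geometric (root-system) representation: writing positive roots in the simple basis $\{\alpha_s\}_{s \in S}$, a $J$-reduced $x$ sends each $\alpha_s$ with $s \in J$ to a positive root, hence sends every nonnegative combination of the $\alpha_s$, $s \in J$ --- in particular every positive root of the sub-system $\Phi_J$ --- to a vector with all coordinates $\geq 0$, that is, to a positive root; therefore no reflection of $W_J$ can be a right inversion of $x$, the contradiction we need. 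Alternatively, since the excerpt already flags this section as standard, one may simply quote the lemma (and indeed the whole proposition) from Bj\"{o}rner--Brenti \cite{Bjorner-Brenti}, where it is part of the basic theory of parabolic quotients. With the obstacle cleared, the induction closes and the proposition follows.
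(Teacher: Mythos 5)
The paper does not prove this proposition: it appears in the background section, which is explicitly deferred to Bj\"{o}rner--Brenti, so there is no in-paper argument to compare against. Your proof is correct and is essentially the standard textbook one --- the length-additivity lemma $\ell(xv)=\ell(x)+\ell(v)$ for $J$-reduced $x$ and $v\in W_J$, proved by induction via the exchange condition, with the one delicate point (that a $J$-reduced element has no reflection of $W_J$ among its inversions) correctly handled through the root-system characterization of inversions --- and the three bullets do follow formally from it as you describe.
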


\section{The symmetric group}\label{sec:permutations}

\subsection{The insertion map}
Recall that the symmetric group $\S_n$ is a Coxeter group generated by $S=\{s_1,\ldots,s_{n-1}\}$ where $s_i$ is the simple transposition $(i\ i+1)$. The set of reflections is $T=\{t_{i,j}\:|\: 1\leq i<j\leq n\}$ where $t_{i,j}=(i\ j)$. For $w\in\S_n$, $t_{i,j}\in T_L(w)$ if $w^{-1}(i)>w^{-1}(j)$, for $1\leq i<j\leq n$.

In this section, let $P$ be a poset on $n$ elements $\{p_1,\ldots,p_n\}$. A \textit{linear extension} is an order-preserving bijection $\lambda:P\rightarrow[n]$ from the poset $P$ to the $n$-element chain $[n]$. Denote the set of linear extensions of $P$ by $\LL(P)$. Each $\lambda\in \LL(P)$ has a naturally associated permutation $\pi_{\lambda}$ where $\pi_{\lambda}(i)=j$ if $\lambda(p_j)=i$. 

\begin{defin}
\label{def:insertion-map}
We now define a map $f_P:\S_n\rightarrow \LL(P)$, called the \textit{insertion map}, from the symmetric group $\S_n$ to the set of linear extensions of any poset $P$ on $n$ elements. This map will be defined recursively, and will resemble promotion (see Section 3.20 of \cite{ec1}). 

Let $w\in\S_n$, its image $\lambda=f_P(w)$ is determined as follows: At step $k$, for $k=1,2,\ldots,n$, we start with $\lambda(p_{w(k)})=k$. If at any time we have $\lambda(p_a)=k$ where $p_a$ is not a maximal element of the induced subposet $P$ on $p_{w(1)},\ldots,p_{w(k)}$, find the smallest value $\lambda(p_b)$ where $p_b>_Pp_a$ and  $\lambda(p_b)$ has already been defined, and swap the values of $p_a$ and $p_b$. Continue this procedure to move the value $k$ up the poset, until $\lambda^{-1}(k)$ is a maximal element of the induced subposet $P$ on $p_{w(1)},\ldots,p_{w(k)}$. 
\end{defin}

\begin{ex}
We show an example of the insertion map defined above. Let $P$ be as in Figure~\ref{fig:insertion-typeA-example} and $w=45312$. Figure~\ref{fig:insertion-typeA-example} shows how the insertion map works at each step, where the final result $\lambda=f_P(w)$ is provided at the end whose corresponding permutation is $\pi_{\lambda}=52134$. 
\begin{figure}[h!]
\centering
\begin{tikzpicture}[scale=0.900000000000000]
\node at (0,0) {$\bullet$};
\node at (0,2) {$\bullet$};
\node at (0.866000000000000,0.5) {$\bullet$};
\node at (0.866000000000000,1.5) {$\bullet$};
\node at (-0.200000000000000,1) {$\bullet$};
\node[left] at (0,2) {$p_4$};
\node[left] at (0,0) {$p_5$};
\node[right] at (0.866000000000000,0.5) {$p_2$};
\node[right] at (0.866000000000000,1.5) {$p_3$};
\node[left] at (-0.200000000000000,1) {$p_1$};
\draw(0,0)--(0.866000000000000,0.5)--(0.866000000000000,1.5)--(0,2)--(-0.200000000000000,1)--(0,0);
\node at (2.80000000000000,0) {$\bullet$};
\node at (2.80000000000000,2) {$\bullet$};
\node at (3.66600000000000,0.5) {$\bullet$};
\node at (3.66600000000000,1.5) {$\bullet$};
\node at (2.60000000000000,1) {$\bullet$};
\node[left] at (2.80000000000000,2) {$p_4$};
\node[left] at (2.80000000000000,0) {$p_5$};
\node[right] at (3.66600000000000,0.5) {$p_2$};
\node[right] at (3.66600000000000,1.5) {$p_3$};
\node[left] at (2.60000000000000,1) {$p_1$};
\draw(2.80000000000000,0)--(3.66600000000000,0.5)--(3.66600000000000,1.5)--(2.80000000000000,2)--(2.60000000000000,1)--(2.80000000000000,0);
\node at (5.60000000000000,0) {$\bullet$};
\node at (5.60000000000000,2) {$\bullet$};
\node at (6.46600000000000,0.5) {$\bullet$};
\node at (6.46600000000000,1.5) {$\bullet$};
\node at (5.40000000000000,1) {$\bullet$};
\node[left] at (5.60000000000000,2) {$p_4$};
\node[left] at (5.60000000000000,0) {$p_5$};
\node[right] at (6.46600000000000,0.5) {$p_2$};
\node[right] at (6.46600000000000,1.5) {$p_3$};
\node[left] at (5.40000000000000,1) {$p_1$};
\draw(5.60000000000000,0)--(6.46600000000000,0.5)--(6.46600000000000,1.5)--(5.60000000000000,2)--(5.40000000000000,1)--(5.60000000000000,0);
\node at (8.40000000000000,0) {$\bullet$};
\node at (8.40000000000000,2) {$\bullet$};
\node at (9.26600000000000,0.5) {$\bullet$};
\node at (9.26600000000000,1.5) {$\bullet$};
\node at (8.20000000000000,1) {$\bullet$};
\node[left] at (8.40000000000000,2) {$p_4$};
\node[left] at (8.40000000000000,0) {$p_5$};
\node[right] at (9.26600000000000,0.5) {$p_2$};
\node[right] at (9.26600000000000,1.5) {$p_3$};
\node[left] at (8.20000000000000,1) {$p_1$};
\draw(8.40000000000000,0)--(9.26600000000000,0.5)--(9.26600000000000,1.5)--(8.40000000000000,2)--(8.20000000000000,1)--(8.40000000000000,0);
\node at (11.2000000000000,0) {$\bullet$};
\node at (11.2000000000000,2) {$\bullet$};
\node at (12.0660000000000,0.5) {$\bullet$};
\node at (12.0660000000000,1.5) {$\bullet$};
\node at (11.0000000000000,1) {$\bullet$};
\node[left] at (11.2000000000000,2) {$p_4$};
\node[left] at (11.2000000000000,0) {$p_5$};
\node[right] at (12.0660000000000,0.5) {$p_2$};
\node[right] at (12.0660000000000,1.5) {$p_3$};
\node[left] at (11.0000000000000,1) {$p_1$};
\draw(11.2000000000000,0)--(12.0660000000000,0.5)--(12.0660000000000,1.5)--(11.2000000000000,2)--(11.0000000000000,1)--(11.2000000000000,0);
\draw[->] (1.50000000000000,1)--(2.00000000000000,1);
\draw[->] (4.30000000000000,1)--(4.80000000000000,1);
\draw[->] (7.10000000000000,1)--(7.60000000000000,1);
\draw[->] (9.90000000000000,1)--(10.4000000000000,1);
\node[below] at (0.1,2) {\textbf{1}};
\node[below] at (2.90000000000000,2) {\textbf{2}};
\node[above] at (2.90000000000000,0) {\textbf{1}};
\node[below] at (5.70000000000000,2) {\textbf{3}};
\node[above] at (5.70000000000000,0) {\textbf{1}};
\node[left] at (6.46600000000000,1.5) {\textbf{2}};
\node[below] at (8.50000000000000,2) {\textbf{4}};
\node[above] at (8.50000000000000,0) {\textbf{1}};
\node[left] at (9.26600000000000,1.5) {\textbf{2}};
\node[right] at (8.20000000000000,1) {\textbf{3}};
\node[below] at (11.3000000000000,2) {\textbf{5}};
\node[above] at (11.3000000000000,0) {\textbf{1}};
\node[left] at (12.0660000000000,1.5) {\textbf{4}};
\node[right] at (11.0000000000000,1) {\textbf{3}};
\node[left] at (12.0660000000000,0.5) {\textbf{2}};
\end{tikzpicture}
\caption{An example of the insertion map.}
\label{fig:insertion-typeA-example}
\end{figure}
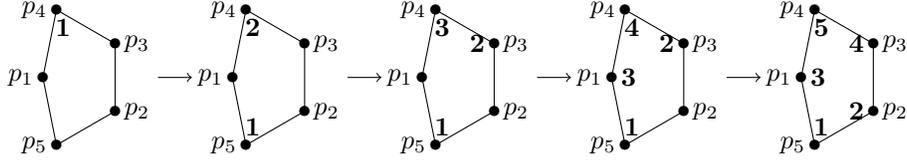
\end{ex}

After step $n-1$ of our construction of $f_P(w)$, only one element $p_i$ has not been assigned a value and we have a linear extension $\lambda'$ of $P\setminus\{p_i\}$. Let us write $\varphi_{p_i}:\LL(P\setminus\{p_i\})\rightarrow\LL(P)$ for the map used in the last step: assign value $n$ to $p_i$, if there exists $p_j$ covering $p_i$, find such $p_j$ with the smallest $\lambda'(p_j)$ and swap the values at $p_i$ and $p_j$, and continue this process until the value $n$ reaches a maximal element in $P$, giving us a linear extension. 

Recall that a \textit{promotion chain} (see \cite{ec1}) of a linear extension $\lambda\in \LL(P)$ is a saturated chain $t_1>t_2>\cdots>t_{\ell}$ from a maximal element to a minimal element in $P$ such that
\begin{itemize}
\item $t_1=\lambda^{-1}(n)$ is the element with the largest $\lambda$-value, and
\item $t_{i+1}$ is the element covered by $t_i$ with the largest $\lambda$-value.
\end{itemize}

\begin{lemma}\label{lem:promotion}
The map $\varphi_{p_i}:\LL(P\setminus\{p_i\})\rightarrow\LL(P)$ is injective, with image consisting of those $\lambda\in\LL(P)$ whose promotion chain passes through $p_i$.
\end{lemma}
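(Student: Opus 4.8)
The plan is to exhibit an explicit inverse. Given $\lambda\in\LL(P)$ whose promotion chain $t_1>t_2>\cdots>t_\ell$ passes through $p_i$, say $p_i=t_r$, define $\psi_{p_i}(\lambda)$ on $P\setminus\{p_i\}$ by shifting labels down the top of the chain: set $\psi_{p_i}(\lambda)(t_j):=\lambda(t_{j+1})$ for $1\le j\le r-1$, and $\psi_{p_i}(\lambda)(x):=\lambda(x)$ for every other $x$. This discards the label $\lambda(t_1)=n$ and leaves exactly the label set $\{1,\dots,n-1\}$. I would then prove that $\psi_{p_i}$ is well-defined and that $\varphi_{p_i}$ and $\psi_{p_i}$ are mutually inverse bijections between $\LL(P\setminus\{p_i\})$ and $\{\lambda\in\LL(P): \text{the promotion chain of }\lambda\text{ passes through }p_i\}$; the Lemma is immediate from this.

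The engine behind everything is the elementary fact that in any linear extension $\mu$ of $P$ one has $x<_P y\Rightarrow\mu(x)<\mu(y)$; hence labels strictly decrease along a promotion chain, and by definition $t_{i+1}$ is the $\mu$-largest element covered by $t_i$. Using this I would trace the bubbling defining $\varphi_{p_i}(\lambda')$: at each stage the ``hot'' element $q=q_k$ carries the temporary label $n$ while all of its covers in $P$ are disjoint from the previously-touched elements $q_0,\dots,q_k$ and so still carry their $\lambda'$-labels, so $q_{k+1}$ is the cover of $q_k$ of smallest $\lambda'$-label and $q_k$ ends with label $\lambda'(q_{k+1})$. An induction then shows the bubbling path $p_i=q_0\lessdot q_1\lessdot\cdots\lessdot q_m$ is exactly the reverse of the top of the promotion chain of the output $\lambda:=\varphi_{p_i}(\lambda')$ (it terminates at $t_1$, the only maximal element among $t_1,\dots,t_r$, and $q_0=t_r=p_i$). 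In particular the promotion chain of $\lambda$ passes through $p_i$, giving the inclusion ``image $\subseteq$'', and reading the labels back off yields $\psi_{p_i}(\varphi_{p_i}(\lambda'))=\lambda'$, hence $\varphi_{p_i}$ is injective.

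For the reverse inclusion I would take an arbitrary $\lambda$ with promotion chain through $p_i=t_r$, set $\lambda':=\psi_{p_i}(\lambda)$, and run the same trace backwards to obtain $\varphi_{p_i}(\lambda')=\lambda$ (at each bubbling step the chosen cover is the next element up the chain, since every competing cover $y$ of the hot element $q$ satisfies $\lambda(q)<\lambda(y)$ strictly by the linear-extension property). The step I expect to be the real obstacle is checking that $\psi_{p_i}(\lambda)$ is genuinely a linear extension of $P\setminus\{p_i\}$, i.e.\ that $\lambda'(x)<\lambda'(y)$ for all $x<_P y$ with $x,y\ne p_i$. The cases where $y$ is off the shifted portion of the chain, or where both $x$ and $y$ lie on it, follow at once from monotonicity of $\lambda$ and the strict decrease of labels along the chain; the delicate case is $y=t_j$ on the shifted portion with $x$ off it. There I would pick a saturated chain $x=z_0\lessdot\cdots\lessdot z_p=t_j$ in $P$, note that $z_{p-1}$ is covered by $t_j$ so $\lambda(z_{p-1})\le\lambda(t_{j+1})=\lambda'(t_j)$ by the defining property of promotion chains, and rule out equality using $x\ne p_i$ and the positions of the $t$'s; then $\lambda'(x)=\lambda(x)\le\lambda(z_{p-1})<\lambda'(t_j)=\lambda'(y)$. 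This establishes the bijection and completes the proof.
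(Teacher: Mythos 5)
Your proof is correct and takes essentially the same route as the paper's: the map $\psi_{p_i}$ you define by shifting labels down the promotion chain is exactly the paper's ``invert the process of swapping values'' (erase the value at $t_1$, pass $\lambda(t_2)$ to $t_1$, and so on until $p_i$ loses its value), and the paper likewise concludes by noting this inversion succeeds precisely when $p_i$ lies on the promotion chain. The only difference is that you carry out in full the verifications---that $\psi_{p_i}(\lambda)$ is a linear extension of $P\setminus\{p_i\}$ and that the two composites are identities---which the paper leaves implicit.
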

\begin{proof}
Take $\lambda\in\LL(P)$ and let $t_1>t_2>\cdots>t_{\ell}$ be its promotion chain. In order for $\lambda$ to come from some $\lambda'\in\LL(P\setminus\{p_i\})$, we invert the process of swapping values: as $\lambda(t_1)=n$, erase the value at $t_1$, find the element covered by $t_1$ with the largest $\lambda$-value, which is $t_2$, erase the value at $t_2$ and give $\lambda(t_2)$ to $t_1$, continue this process until $p_i$ has its value erased. This inverse process can succeed if and only if $p_i$ is part of the promotion chain of $\lambda$. 
\end{proof}

\subsection{Inversion posets}
\begin{defin}
The \textit{inversion poset} $P_u$ of a permutation $u\in\S_n$ is a poset on $n$ elements $p_1,\ldots,p_n$ such that $p_i<p_j$ if $i<j$ and $u^{-1}(i)<u^{-1}(j)$. 
\end{defin}

The following observation of Bj\"{o}rner--Wachs \cite{BW-lin-ext} is the fundamental link between the notions of convex subsets of the symmetric group and linear extensions of posets, and it is in analogy with this proposition that definitions related to posets can be extended to other Coxeter groups. This perspective on extending definitions and results about posets to other Coxeter groups was studied by Reiner \cite{Reiner-signed-posets} and more recently by the authors \cite{gaetz2020balance}.

\begin{prop}[Bj\"{o}rner and Wachs \cite{BW-lin-ext}]\label{prop:BW-lin-ext}
The interval $[\id,w]_R$ in the weak order is exactly $\{\pi_\lambda\:|\:\lambda\in \LL(P_w)\}.$
\end{prop}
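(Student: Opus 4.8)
The plan is to translate both sides of the claimed equality into conditions on inversion sets and then match them via Proposition~\ref{prop:weak-order-from-inversions}. The first step is to set up a dictionary between permutations and labelings: to $u \in \S_n$ I associate the bijective labeling $\lambda_u \colon \{p_1,\dots,p_n\} \to [n]$ given by $\lambda_u(p_j) \coloneqq u^{-1}(j)$. Unwinding the definition $\pi_\lambda(i) = j \iff \lambda(p_j) = i$, one checks directly that $\pi_\lambda^{-1}(j) = \lambda(p_j)$ for every labeling $\lambda$; hence $u \mapsto \lambda_u$ is a bijection from $\S_n$ onto the set of all bijective labelings of $\{p_1,\dots,p_n\}$ by $[n]$, with inverse $\lambda \mapsto \pi_\lambda$, and in particular $\pi_{\lambda_u} = u$. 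Under this bijection, proving the proposition reduces to showing that $\lambda_u$ is an \emph{order-preserving} labeling of $P_w$ (that is, a linear extension) if and only if $u \leq_R w$.

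The second step is to unwind ``order-preserving''. One first observes, by a one-line transitivity check, that $P_w$ really is a poset, and that for $i < j$ the relation $p_i <_{P_w} p_j$ holds exactly when $w^{-1}(i) < w^{-1}(j)$ (with no comparabilities running from a larger index to a smaller one). Therefore $\lambda_u$ is order-preserving on $P_w$ if and only if, for all $i < j$, $w^{-1}(i) < w^{-1}(j)$ implies $u^{-1}(i) < u^{-1}(j)$. Taking contrapositives within the set of pairs $i < j$, and using injectivity of $u^{-1}$ and $w^{-1}$ to turn $\not<$ into $>$, this says: $u^{-1}(i) > u^{-1}(j)$ implies $w^{-1}(i) > w^{-1}(j)$, i.e.\ $t_{i,j} \in T_L(u) \Rightarrow t_{i,j} \in T_L(w)$ for all $i<j$. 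That is exactly the inclusion $T_L(u) \subseteq T_L(w)$.

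The final step is to invoke Proposition~\ref{prop:weak-order-from-inversions}, which equates $T_L(u) \subseteq T_L(w)$ with $u \leq_R w$, i.e.\ with $u \in [\id,w]_R$. Chaining the equivalences gives $u \in [\id,w]_R \iff T_L(u) \subseteq T_L(w) \iff \lambda_u \in \LL(P_w) \iff u \in \{\pi_\lambda \mid \lambda \in \LL(P_w)\}$, which is the assertion. (If one prefers the hull phrasing, recall $[\id,w]_R = \conv(\id,w)$ from Proposition~\ref{prop:convex-hull-from-inversions}.)

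I do not expect a genuine obstacle here: the whole argument is bookkeeping with a single layer of inverses. The one place to be careful is the contrapositive step, where the linear-extension condition on $\lambda_u$ must become an \emph{inclusion} (not an equality) of inversion sets, and in the direction $T_L(u) \subseteq T_L(w)$ matching $u \leq_R w$ rather than its reverse. I would present this compactly, folding the verification that $P_w$ is a poset into a single sentence.
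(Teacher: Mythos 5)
Your proof is correct, and it is essentially the argument the paper gives for the generalized statement (Proposition~\ref{prop:generalized-BW-lin-ext}, specialized to $U=\{\id,w\}$): both translate membership in $[\id,w]_R=\conv(\id,w)$ and the linear-extension condition on $P_w$ into the single inversion-set inclusion $T_L(u)\subseteq T_L(w)$. The only cosmetic difference is that you invoke Proposition~\ref{prop:weak-order-from-inversions} directly where the paper routes through Proposition~\ref{prop:convex-hull-from-inversions}, and you make the bijection $u\leftrightarrow\lambda_u$ explicit.
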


We will also be using the permuted inversion poset $wP_u$, where $w\in\S_n$ acts on $P_u$ by permuting the labels. In $wP_u$, $p_{w(i)}<p_{w(j)}$ if and only if $p_i<p_j$ in $P_u$. Note that $P_u$ is always \textit{naturally labeled}, meaning that $p_i\mapsto i$ is a linear extension, but $wP_u$ is not necessarily naturally labeled. 

The inversion poset can be generalized as follows.
\begin{defin}
Let $U\subset\S_n$, then the poset $P_U$ is defined on $n$ elements $p_1,\ldots,p_n$ such that $p_i<p_j$ if $u^{-1}(i)<u^{-1}(j)$ for all $u\in U$.
\end{defin}

Therefore, the inversion poset $P_u$ is just $P_{\{\id,u\}}$ and the permuted inversion poset $wP_{u}$ is $P_{\{w, wu\}}$. The following proposition resembles Proposition~\ref{prop:BW-lin-ext}, we include the proof for convenience. 

\begin{prop}\label{prop:generalized-BW-lin-ext}
For $U\subset\S_n$, the convex set $\conv(U)$ is exactly $\{\pi_{\lambda}\:|\:\lambda\in\LL(P_U)\}$.
\end{prop}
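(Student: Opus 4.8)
The plan is to translate the statement entirely into the language of inversion sets via the results already assembled in the excerpt, so that the claim becomes a purely combinatorial identity between two descriptions of the same set of permutations.

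First I would unwind the right-hand side. For $U \subseteq \S_n$, a linear extension $\lambda \in \LL(P_U)$ corresponds (via $\lambda \mapsto \pi_\lambda$) to a permutation $\pi$ with the property that $\pi^{-1}(i) < \pi^{-1}(j)$ whenever $p_i <_{P_U} p_j$. By the definition of $P_U$, the relation $p_i <_{P_U} p_j$ holds exactly when $u^{-1}(i) < u^{-1}(j)$ for \emph{all} $u \in U$ (together with $i<j$, though the comparability is really governed by the agreement of all the $u^{-1}$; one must be a little careful, since the defining condition as written is only ``$p_i < p_j$ if $u^{-1}(i)<u^{-1}(j)$ for all $u$,'' and the transitive closure is automatic because if all $u^{-1}$ agree on the relative order of $i,j$ and of $j,k$ they agree on $i,k$). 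So $\pi_\lambda$ ranges over exactly those $\pi$ such that: for every pair $i<j$, if $t_{i,j} \notin T_L(u)$ for all $u \in U$ — i.e. $u^{-1}(i)<u^{-1}(j)$ for all $u$ — then $t_{i,j} \notin T_L(\pi)$. Equivalently, $T_L(\pi) \subseteq \bigcup_{u \in U} T_L(u)$. This is the upper containment; I would also need the lower containment $\bigcap_{u\in U} T_L(u) \subseteq T_L(\pi)$, which comes from the symmetric observation: if $u^{-1}(i)>u^{-1}(j)$ for all $u\in U$ then $p_j <_{P_U} p_i$, forcing $\pi^{-1}(j)<\pi^{-1}(i)$, i.e. $t_{i,j}\in T_L(\pi)$. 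So $\{\pi_\lambda : \lambda \in \LL(P_U)\}$ is exactly $\{\pi \in \S_n : \bigcap_u T_L(u) \subseteq T_L(\pi) \subseteq \bigcup_u T_L(u)\}$.

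Second, I would invoke Proposition~\ref{prop:convex-hull-from-inversions}, which says precisely that this latter set equals $\conv(U)$. That completes the identification. For the write-up I would probably phrase it as a short two-way chain of ``if and only if'' statements at the level of pairs $(i,j)$ with $i<j$, since the poset relations in $P_U$ are generated by such pairs and the inversion characterization $t_{i,j}\in T_L(w) \iff w^{-1}(i)>w^{-1}(j)$ (recalled at the start of Section~\ref{sec:permutations}) matches up cleanly with comparabilities in $P_U$.

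The only real subtlety — and the step I'd watch most carefully — is the bookkeeping around the definition of $P_U$: as literally stated it only specifies \emph{when} $p_i<p_j$, and one should check that this is genuinely a partial order (antisymmetry and transitivity) and that its set of comparable pairs is exactly $\{(i,j) : i<j,\ u^{-1}(i)<u^{-1}(j)\ \forall u\}$ with no extra relations forced by transitive closure. This is routine — agreement of all the $u^{-1}$ on relative orders is transitive — but it is the hinge that makes ``$\lambda$ is a linear extension of $P_U$'' equivalent to the clean inversion-set condition, so I would state it explicitly rather than leave it implicit. With that in hand the proposition is immediate from Proposition~\ref{prop:convex-hull-from-inversions}.
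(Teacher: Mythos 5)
Your proposal is correct and follows essentially the same route as the paper: both arguments reduce $\conv(U)$ to the inversion-set sandwich $\bigcap_{u\in U} T_L(u)\subseteq T_L(w)\subseteq\bigcup_{u\in U}T_L(u)$ via Proposition~\ref{prop:convex-hull-from-inversions} and then match this pairwise condition on $(i,j)$ with the comparabilities defining $P_U$. Your extra remark that $P_U$ is genuinely a partial order (transitivity of agreement of all the $u^{-1}$ on relative orders) is a point the paper leaves implicit, but it is routine and does not change the argument.
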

\begin{proof}
By Proposition~\ref{prop:convex-hull-from-inversions}, $\conv(U)$ consists of those $w\in \S_n$ such that
\[
\bigcap_{u\in U} T_L(u)\subset T_L(w)\subset\bigcup_{u\in U}T_L(u).
\]
The set $\bigcap_{u\in U} T_L(u)$ consists of those $t_{i,j}$ where $i<j$ and $u^{-1}(i)>u^{-1}(j)$ for all $u\in U$; likewise, the set $\bigcup_{u\in U}T_L(u)$ does not contain any $t_{i,j}$ where $i<j$ and $u^{-1}(i)<u^{-1}(j)$ for all $u\in U$. So $w\in\conv(U)$ if and only if $w^{-1}(i)<w^{-1}(j)$ for all $i,j$ such that $u^{-1}(i)<u^{-1}(j)$ for all $u\in U$. This corresponds exactly to how $P_U$ is constructed. 
\end{proof}

\begin{theorem}\label{thm:typeA}
For any $w,u\in\S_n$, we have an injection
$$\LL(P_{\{\id,w,wu\}})\hookrightarrow \LL(P_w)\times\LL(wP_u),\text{ given by } \lambda\mapsto \big(f_{P_w}(\pi_\lambda),f_{wP_u}(\pi_\lambda)\big).$$
\end{theorem}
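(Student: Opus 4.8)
The plan is to show that the putative injection $\lambda \mapsto \big(f_{P_w}(\pi_\lambda), f_{wP_u}(\pi_\lambda)\big)$ is genuinely injective by reconstructing $\pi_\lambda$ from the pair of output linear extensions. The key observation is that $\conv(\{\id, w, wu\}) = \{\pi_\lambda : \lambda \in \LL(P_{\{\id,w,wu\}})\}$ by Proposition~\ref{prop:generalized-BW-lin-ext}, and that this convex set sits inside both $\conv(\id, w) = [\id,w]_R$ (which Proposition~\ref{prop:BW-lin-ext} identifies with $\{\pi_\mu : \mu \in \LL(P_w)\}$) and $\conv(w, wu)$ (identified with $\{\pi_\nu : \nu \in \LL(wP_u)\}$ since $wP_u = P_{\{w,wu\}}$). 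So the two coordinate maps $f_{P_w}$ and $f_{wP_u}$ are, on permutations lying in the relevant convex sets, just the restrictions of two instances of the insertion map of Definition~\ref{def:insertion-map}. The strategy is to understand the fibers of the insertion map well enough that knowing the images under \emph{both} maps pins down $\pi_\lambda$ uniquely.

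First I would set up the recursive structure. Lemma~\ref{lem:promotion} tells us that $\varphi_{p_i} : \LL(P \setminus \{p_i\}) \to \LL(P)$ is injective with image the linear extensions whose promotion chain passes through $p_i$; and the insertion map $f_P(w)$ is, by construction, $\varphi_{p_{w(n)}}$ applied to $f_{P \setminus \{p_{w(n)}\}}(w')$ where $w' = w(1)\cdots w(n-1)$ is the pattern of $w$ on its first $n-1$ letters (suitably standardized). Thus $f_P$ factors as a composite of $n$ promotion-type insertions, peeling off $p_{w(n)}, p_{w(n-1)}, \ldots$ in turn. The reconstruction will be: from $\lambda = f_{P_w}(\pi_\lambda)$, read off the promotion chain of $\lambda$ to recover the element $p_{w(n)}$ — wait, more precisely to recover the last letter of $\pi_\lambda$; similarly from $f_{wP_u}(\pi_\lambda)$ one reads off something about $\pi_\lambda$. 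The point is that the two posets $P_w$ and $wP_u = P_{\{w,wu\}}$ together see enough of the order structure: by the formula defining $P_U$, an element pair is comparable in $P_{\{\id,w,wu\}}$ exactly when it is comparable in $P_{\{\id,w\}} = P_w$ or in $P_{\{w,wu\}} = wP_u$ (taking the union of the constraint sets — one needs to be slightly careful here because $\id$ appears in the first but $w$ in the second, so I would think about this via inversion sets rather than the poset definition directly). Concretely, $T_L(\pi_\lambda)$ must contain $T_L(\id)\cap T_L(w)\cap T_L(wu) = T_L(w)\cap T_L(wu)$ and be contained in $T_L(w)\cup T_L(wu)$; the pair of images under the two insertion maps should let us recover this inversion set step by step.

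The main obstacle, I expect, is proving that the recovered data is \emph{consistent} — that at each stage of the reverse promotion, the element one is forced to remove from $P_w$ and the element one is forced to remove from $wP_u$ correspond to the same letter of $\pi_\lambda$, and that this letter is determined. This is where one genuinely uses that $\pi_\lambda \in \conv(\{\id,w,wu\})$ rather than an arbitrary permutation: the insertion map $f_{P_w}$ applied to a permutation outside $[\id,w]_R$ still produces \emph{some} linear extension, so injectivity can only hold on the correct domain. I would try to prove by downward induction on $n$ the statement: \emph{if $\pi, \pi' \in \conv(\{\id,w,wu\})$ have $f_{P_w}(\pi) = f_{P_w}(\pi')$ and $f_{wP_u}(\pi) = f_{wP_u}(\pi')$, then $\pi(n) = \pi'(n)$, and after deleting this common last letter the two restricted permutations again lie in the analogous convex set for the smaller posets, with matching images.} The base case is trivial, and the inductive step reduces to: the promotion chain of $f_{P_w}(\pi)$ determines $\pi(n)$ among the candidates allowed by membership in the convex set. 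The candidates for $\pi(n)$ are exactly the maximal elements of $P_{\{\id,w,wu\}}$ that are "legal" last letters; I would argue the promotion chains of the two images jointly isolate one such element — essentially because $p_a \lessdot p_b$ in $P_{\{\id,w,wu\}}$ forces this cover in at least one of $P_w$, $wP_u$, so the union of the two promotion chains, read off from the two images, reconstructs the relevant local order around the top of $\pi_\lambda$. Verifying that this "at least one of the two" dichotomy survives the recursion — i.e., that the smaller posets obtained by deletion are again $P_{w'}$ and $w'P_{u'}$ for the appropriate patterns — is the technical heart, and I would handle it by tracking inversion sets of the standardizations rather than redrawing Hasse diagrams.
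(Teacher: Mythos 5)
Your overall architecture is the paper's: induct on $n$, peel off the element of $P_{\{\id,w,wu\}}$ receiving the top value $n$, use Lemma~\ref{lem:promotion} to characterize the image of $\varphi_{p_a}$ via promotion chains, and argue that the two output linear extensions jointly pin down which maximal element received $n$. The bookkeeping you defer to the end --- that deleting $p_a$ turns $P_w$, $wP_u$, $P_{\{\id,w,wu\}}$ into $P_{w'}$, $w'P_{u'}$, $P_{\{\id,w',w'u'\}}$ for the standardized patterns $w',u'$ --- is indeed routine and is exactly how the paper sets up the induction.

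The gap is at the one step that makes the argument work. You justify uniqueness of the recovered letter by claiming that a cover $p_a\lessdot p_b$ in $P_{\{\id,w,wu\}}$ is a cover in at least one of $P_w$, $wP_u$; this is both doubtful (a cover in the intersection order need not be a cover in either factor) and, more importantly, pointed the wrong way. What you must rule out is that two \emph{distinct maximal} (hence incomparable) elements $p_a\neq p_{a'}$ of $P_{\{\id,w,wu\}}$ both lie on the promotion chain of $\lambda$ in $P_w$ \emph{and} both lie on the promotion chain of $\tau$ in $wP_u$. The lemma you need is the ``and'' version: if $p_a,p_{a'}$ are comparable in both $P_w=P_{\{\id,w\}}$ and $wP_u=P_{\{w,wu\}}$, then they are comparable in $P_{\{\id,w,wu\}}$. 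This uses crucially that the two pairs share $w$: taking $a<a'$, comparability in $P_w$ forces $w^{-1}(a)<w^{-1}(a')$ (the reversed relation is impossible since $a<a'$), and then comparability in $wP_u$ forces $(wu)^{-1}(a)<(wu)^{-1}(a')$, so $p_a<p_{a'}$ in $P_{\{\id,w,wu\}}$, contradicting maximality of $p_a$. Combined with the fact that two elements on a common saturated chain are comparable, this closes your reconstruction and recovers the paper's proof. Note also that your parenthetical ``comparable in $P_{\{\id,w,wu\}}$ exactly when comparable in $P_w$ or in $wP_u$'' is false as stated: for $w=\id$ and $u=s_1$ in $\S_2$, the elements $p_1,p_2$ are comparable in $P_w$ (a chain) but not in $P_{\{\id,w,wu\}}$ (an antichain); the correct connective is ``and,'' and for these three particular posets the resulting biconditional does hold by the argument above.
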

\begin{proof}
Use induction on $n$, the base case $n=1$ being trivial. Let $P=P_{\{\id,w,wu\}}$ and let $A$ be the set of maximal elements of $P$, which is an antichain in $P$. We have the following partition $$\LL(P)=\bigsqcup_{p_a\in A}\{\lambda\in\LL(P)\:|\:\lambda(p_a)=n\}\simeq\bigsqcup_{p_a\in A}\LL(P\setminus\{p_a\}).$$

Fix $p_a\in A$ and consider the restriction $$w':[n]\setminus\{w^{-1}(a)\}\rightarrow [n]\setminus\{a\}$$ of $w$, viewed as a permutation on $n-1$ elements, by keeping the relative order of indices. In this way, the inversion poset $P_{w'}$ is $P_{w}\setminus\{p_a\}$. Also consider the restriction $$u':[n]\setminus\{u^{-1}w^{-1}(a)\}\rightarrow[n]\setminus\{w^{-1}(a)\}$$ so that the inversion poset $w'P_{u'}$ is $wP_u\setminus\{p_a\}$ and $P_{\{\id,w',w'u'\}}$ is $P\setminus\{p_a\}$. 

For $\lambda$ with $\lambda(p_a)=n$, the insertion map $f_{P_w}$ can be identified with the composition $\varphi_{p_a} \circ f_{P_{w'}}$. As a result, the map $\lambda\mapsto\big(f_{P_w}(\pi_{\lambda}),f_{wP_u}(\pi_{\lambda})\big)$ of interest of this theorem can be written as
$$\LL(P)\simeq\bigsqcup_{p_a\in A}\LL(P\setminus\{p_a\})\hookrightarrow\bigsqcup_{p_a\in A}\LL(P_w\setminus\{p_a\})\times\LL(wP_u\setminus\{p_a\}) \rightarrow\LL(P_w)\times\LL(wP_u)$$
where the middle injection is given by the induction hypothesis on $w',u'$. It suffices to show that the last map provided by $\varphi$ is an injection.

Assume to the contrary that we have $\alpha\in\LL(P_w\setminus\{p_a\})$, $\beta\in\LL(wP_u\setminus\{p_a\})$, $\alpha'\in\LL(P_w\setminus\{p_{a'}\})$, and $\beta'\in\LL(wP_u\setminus\{p_{a'}\})$ such that $\varphi_{p_a}(\alpha)=\varphi_{p_{a'}}(\alpha')=\lambda$ and $\varphi_{p_a}(\beta)=\varphi_{p_{a'}}(\beta')=\tau$. If $a=a'$, by Lemma~\ref{lem:promotion} we must have $\alpha=\alpha'$ and $\beta=\beta'$. Thus $a\neq a'$ and we can assume without loss of generality that $a<a'$.

Recall from Lemma~\ref{lem:promotion} that since $\varphi_{p_a}(\alpha)=\lambda$, $p_a$ must be in the promotion chain of $\lambda$. Similarly, the promotion chains of $\lambda$ and $\tau$ both contain $a$ and $a'$. In particular, this means that $p_a$ and $p_{a'}$ are comparable in $P_w$ and that $p_{w^{-1}(a)}$ and $p_{w^{-1}(a')}$ are comparable in $P_u$. As $a<a'$, we must have $w^{-1}(a)<w^{-1}(a')$. And, consequently, we have that $(wu)^{-1}(a)<(wu)^{-1}(a')$. By the definition of $P_{\{\id,w,wu\}}$, as $a<a'$, we know $w^{-1}(a)<w^{-1}(a')$, $(wu)^{-1}(a)<(wu)^{-1}(a')$, and $p_a<_P p_{a'}$, contradicting our assumption that $p_a$ was maximal in $P$.
\end{proof}
\begin{cor}\label{cor:typeA}
The Cayley graph $\cay(\S_n)$ has the strong hull property.
\end{cor}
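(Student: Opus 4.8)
The plan is to deduce Corollary~\ref{cor:typeA} directly from Theorem~\ref{thm:typeA} by two routine translations: first reducing, via the homogeneity of the Cayley graph, to triples anchored at the identity of the special shape $(\id,w,wu)$; and then rewriting the three convex hulls appearing in~(\ref{eq:strong-hull}) as cardinalities of sets of linear extensions via Proposition~\ref{prop:generalized-BW-lin-ext}.

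For the first step I would note that for any $g\in\S_n$ the map $x\mapsto gx$ is a graph automorphism of $\cay(\S_n)$, since this is the \emph{right} Cayley graph; hence it preserves the path metric, takes convex sets to convex sets, and satisfies $\conv(gX)=g\cdot\conv(X)$, so $\lvert\conv(gX)\rvert=\lvert\conv(X)\rvert$ for every $X\subseteq\S_n$. It is therefore enough to verify~(\ref{eq:strong-hull}) for triples of the form $(\id,\,w,\,wu)$: an arbitrary triple $(x,y,z)$ becomes, after left multiplication by $x^{-1}$, the triple $(\id,\,x^{-1}y,\,x^{-1}z)$, and setting $w:=x^{-1}y$ and $u:=y^{-1}z$ gives $wu=x^{-1}z$, with all three cardinalities in~(\ref{eq:strong-hull}) unchanged. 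For the second step, recall from Proposition~\ref{prop:generalized-BW-lin-ext} that $\lambda\mapsto\pi_\lambda$ is a bijection $\LL(P_U)\to\conv(U)$ for any finite $U\subseteq\S_n$ (it is injective since $\pi_\lambda$ determines $\lambda$). Applying this with $U=\{\id,w,wu\}$, $U=\{\id,w\}$, and $U=\{w,wu\}$, and recalling that $P_{\{\id,w\}}=P_w$ and $P_{\{w,wu\}}=wP_u$, one obtains
\[
\lvert\conv(\id,w,wu)\rvert=\lvert\LL(P_{\{\id,w,wu\}})\rvert,\qquad
\lvert\conv(\id,w)\rvert=\lvert\LL(P_w)\rvert,\qquad
\lvert\conv(w,wu)\rvert=\lvert\LL(wP_u)\rvert.
\]
Theorem~\ref{thm:typeA} supplies an injection $\LL(P_{\{\id,w,wu\}})\hookrightarrow\LL(P_w)\times\LL(wP_u)$, hence $\lvert\LL(P_{\{\id,w,wu\}})\rvert\le\lvert\LL(P_w)\rvert\cdot\lvert\LL(wP_u)\rvert$; combining this with the three displayed equalities gives $\lvert\conv(\id,w)\rvert\cdot\lvert\conv(w,wu)\rvert\ge\lvert\conv(\id,w,wu)\rvert$, and by the reduction above this is~(\ref{eq:strong-hull}) for every triple, which is exactly the strong hull property.

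I do not expect a genuine obstacle: once Theorem~\ref{thm:typeA} is available, the corollary is pure bookkeeping. The one point that demands attention is the pairing of factors, namely ensuring that $\LL(P_w)$ and $\LL(wP_u)$ correspond to the two pairs of $\{\id,w,wu\}$ that share the element $w$ — so that in the reduction to the special triple, the ``middle'' element $v$ of~(\ref{eq:strong-hull}) (the element common to the two pairs on the left-hand side) is the element sent to $w$, and not to $\id$, since Theorem~\ref{thm:typeA} is not symmetric in the three members of the triple.
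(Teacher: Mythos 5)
Your proposal is correct and follows essentially the same route as the paper: reduce to triples of the form $(\id,w,wu)$ using the left-multiplication symmetry of the right Cayley graph, translate the three hull cardinalities into linear-extension counts via Proposition~\ref{prop:generalized-BW-lin-ext}, and invoke the injection of Theorem~\ref{thm:typeA}. Your closing remark about pairing the factors so that the shared element of the two pairs lands on $w$ (not $\id$) is exactly the right point of care, and your write-up simply spells out details the paper leaves implicit.
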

\begin{proof}
By symmetry of the Cayley graph, $\left|\conv(u,v)\right|=\left|\conv(\id,u^{-1}v)\right|$. For any $u,v,w\in\S_n$, to show $\left|\conv(u,v)\right|\cdot\left|\conv(v,w)\right|\geq\left|\conv(u,v,w)\right|$, it suffices to show that $\left|\conv(\id,w)\right|\cdot\left|\conv(w,wu)\right|\geq\left|\conv(\id,w,wu)\right|$, which follows directly from Proposition~\ref{prop:generalized-BW-lin-ext} and Theorem~\ref{thm:typeA}.
\end{proof}

\begin{remark}
An independent construction of an injection demonstrating Sidorenko's inequality (\ref{eq:sidorenko}) was given by Chan--Pak--Panova \cite{Chan-Pak-Panova-alternative}.
\end{remark}

\section{The hyperoctahedral group}\label{sec:signed}
In this section, we work with the family of finite Coxeter groups of type $B$, called the hyperoctahedral groups. We will extend the constructions and results from Section~\ref{sec:permutations} to this setting in order to complete the proof of Theorem~\ref{thm:types-A-and-B}. 

For $i\in\Z_{>0}$, it will sometimes be convenient to write $\bar i$ for $-i$. For $n\in\Z_{>0}$, let $\overline{[n]}$ denote the set $\{-n,\ldots,-1,1,\ldots,n\}$. Let $B_n$ denote the hyperoctahedral group of all signed permutations $w$ of $\overline{[n]}$ such that $w(i)=-w(\bar i)$ for all $i$. We write such a signed permutation $w$ in one-line notation as $w(1)w(2)\cdots w(n)$. For example, $w=4\bar213$ means $w(1)=4$, $w(-1)=-4$, $w(2)=-2$, $w(-2)=2$ and so on. 

The Coxeter group $B_n$ has simple generators $\{t_{i,i+1}\:|\: 1\leq i\leq n-1\}\cup\{t_{1,\bar 1}\}$ where $t_{i,j}$ is the involution $(i\ j)(\bar i\ \bar j)$ if $i\neq\pm j$ and $(i\ j)$ if $i=\bar j$. The set of reflections is $$T=\{t_{i,j},t_{i,\bar j}\:|\: 1\leq i<j\leq n\}\cup\{t_{i,\bar i}\:|\:1\leq i\leq n\}.$$ For $w\in B_n$, $t_{i,j}\in T_L(w)$ if $w^{-1}(i)>w^{-1}(j)$, $t_{i,\bar j}\in T_L(w)$ if $w^{-1}(i)<w^{-1}(\bar j)$, and $t_{i,\bar i}\in T_L(w)$ if $w^{-1}(i)<w^{-1}(\bar i)$. More details can be found in Chapter 8 of Bj\"{o}rner--Brenti \cite{Bjorner-Brenti}.

\begin{defin}\label{def:typeB-poset}
A poset $P$ on $2n$ elements $\{p_1,\ldots,p_n,p_{\bar1},\ldots,p_{\bar n}\}$ is called a \textit{type B poset} if whenever $p_i<_P p_j$, we have $p_{-j}<_P p_{-i}$. 
\end{defin}
\begin{defin}\label{def:typeB-linear-extension}
A \textit{type B linear extension} of a type $B$ poset $P$ is an order-preserving bijection $\lambda:P\rightarrow\overline{[n]}$ such that $\lambda(p_i)=-\lambda(p_{\bar i})$ for all $i\in [n].$
\end{defin}
The set of type $B$ linear extensions is denoted $\B(P)$. Any $\lambda\in\B(P)$ naturally corresponds to an element $\pi_{\lambda}\in B_n$ where $\pi_{\lambda}(i)=j$ if $\lambda(p_j)=i$ for all $i\in\overline{[n]}$ (see Proposition~\ref{prop:typeB-BW-lin-ext} for an analog of Proposition~\ref{prop:generalized-BW-lin-ext}.)

We now define an insertion map for type $B$ posets, which requires more careful treatment than the insertion map for posets as in Section~\ref{sec:permutations}.

\begin{defin}
Let $P$ be a type $B$ poset with $2n$ elements $\{p_1,\ldots,p_n,p_{\bar 1},\ldots,p_{\bar n}\}$. The map $\varphi_{p_k}^P:\B(P\setminus\{p_k,p_{\bar k}\})\rightarrow \B(P)$ is defined as follows:
\begin{itemize}
\item For $\lambda'\in\B(P\setminus\{p_k,p_{\bar k}\})$, we construct $\lambda:P\rightarrow\overline{[n]}$ by first setting $\lambda(p_k)=n$, $\lambda(p_{\bar k})=-n$, and $\lambda(p_j)=\lambda'(p_j)$ if $j\neq \pm k$.
\item If $\lambda(p_a)=n$ but $p_a$ is not a maximal element of $P$, find the $p_b$ greater than $p_a$ with the smallest value for $\lambda(p_b)$ and swap the $\lambda$-values of $p_a$ and $p_b$ and simultaneously swap those of $p_{\bar a}$ and $p_{\bar b}$; iterate this process until $\lambda^{-1}(n)$ is a maximal element of $P$.
\end{itemize}
\end{defin}
\begin{remark}
When we have $\lambda(p_a)=n$ and $p_a$ is not a maximal element of $P$, it is possible that the $p_b$ covering $a$ with the smallest $\lambda$-value happens to be $b=-a$, in which case swapping the values of $p_a$ and $p_b$ and of $p_{\bar a}$ and $p_{\bar b}$ are done with just one swap.
\end{remark}
We will write $\varphi_{p_k}$ for $\varphi_{p_k}^P$ for simplicity when there is no confusion. Note that the two maps $\varphi_{p_k}^P$ and $\varphi_{p_{-k}}^P$ from $\B(P\setminus\{p_k,p_{\bar k}\})$ to $\B(P)$ do not coincide. 

\begin{defin}\label{def:promotion-chain}
For $\lambda\in\B(P)$, its \textit{promotion chain} $p_{t_1}>p_{t_2}>\cdots>p_{t_{\ell}}$ is a saturated chain from a maximal element of $P$ to a minimal element of $P$ constructed as follows:
\begin{itemize}
\item First, $p_{t_1}=\lambda^{-1}(n)$ is the element with the largest $\lambda$-value;
\item Now suppose that $p_{t_i}$ has been assigned value $n$, let $p_{t_{i+1}}$ be the element covered by $p_{t_i}$ with the largest $\lambda$-value and swap the $\lambda$-values of $p_{t_i}$ and $p_{t_{i+1}}$ and of $p_{-t_i}$ and $p_{-t_{i+1}}$.
\end{itemize}
\end{defin}
Note that as the chain $p_{t_1}>p_{t_2}>\cdots>p_{t_{\ell}}$ records the movement of the value $n$ while $p_{-t_1}<p_{-t_2}<\cdots<p_{-t_{\ell}}$ records the movement of the value $-n$. 

The following lemma follows immediately from the above construction and is analogous to Lemma~\ref{lem:promotion} so we omit the proof here.

\begin{lemma}\label{lem:promotion-typeB}
The map $\varphi_{p_k}:\B(P\setminus\{p_k,p_{\bar k}\})\rightarrow\B(P)$ is injective, with image consisting of those $\lambda\in\B(P)$ whose promotion chain passes through $p_k$. 
\end{lemma}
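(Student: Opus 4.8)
The plan is to mirror the proof of Lemma~\ref{lem:promotion}, carrying the type $B$ symmetry $\lambda(p_i)=-\lambda(p_{\bar i})$ along at every step. Concretely, I would construct an explicit two-sided inverse to $\varphi_{p_k}$ on the set $I_{p_k}\coloneqq\{\lambda\in\B(P)\:|\:\text{the promotion chain of }\lambda\text{ passes through }p_k\}$, which by Definition~\ref{def:promotion-chain} is exactly the claimed image.

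Given $\lambda\in I_{p_k}$ with promotion chain $p_{t_1}>p_{t_2}>\cdots>p_{t_\ell}$ and with $p_{t_j}=p_k$, define $\psi_{p_k}(\lambda)$ by running the swaps of Definition~\ref{def:promotion-chain} in reverse and stopping at $p_k$: erase the value $n$ at $p_{t_1}$ and $-n$ at $p_{-t_1}$, then for $i=1,\dots,j-1$ move $\lambda(p_{t_{i+1}})$ onto $p_{t_i}$ and simultaneously $\lambda(p_{-t_{i+1}})$ onto $p_{-t_i}$, and finally delete $p_{t_j}=p_k$ and $p_{-t_j}=p_{\bar k}$ together with their values. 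I would then check three things: (i) the resulting labeling of $P\setminus\{p_k,p_{\bar k}\}$ is order-preserving — this is the usual argument that promotion sends a linear extension to a linear extension, using only that $p_{t_{i+1}}$ is the element covered by $p_{t_i}$ of largest $\lambda$-value; (ii) it still satisfies $\lambda'(p_i)=-\lambda'(p_{\bar i})$, which is immediate since every move was performed simultaneously on an index and its negative; and (iii) the values used are exactly $\overline{[n-1]}$, since a straightforward count shows that the only values removed are $n$ and $-n$. Thus $\psi_{p_k}\colon I_{p_k}\to\B(P\setminus\{p_k,p_{\bar k}\})$ is well defined.

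Next I would verify $\varphi_{p_k}\circ\psi_{p_k}=\id_{I_{p_k}}$ and $\psi_{p_k}\circ\varphi_{p_k}=\id$. The key observation, exactly as in Lemma~\ref{lem:promotion}, is that the sequence of elements successively holding the value $n$ during the execution of $\varphi_{p_k}(\lambda')$, read in reverse order and together with $p_k$ where it begins, is precisely the promotion chain of $\lambda\coloneqq\varphi_{p_k}(\lambda')$: when $n$ sits at $p_a$ having just arrived from a cover relation $p_c\lessdot p_a$, the element $p_c$ is the element covered by $p_a$ of largest current value (everything strictly above $p_a$ is unchanged and at most $n-1$, and $p_c$ is the branch that $n$ climbed), while $p_a$ was itself chosen as the cover of $p_c$ of smallest value, so the forward and backward rules are genuinely inverse to each other. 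In particular $\lambda\in I_{p_k}$, which together with $\varphi_{p_k}\circ\psi_{p_k}=\id_{I_{p_k}}$ gives the image characterization, and injectivity of $\varphi_{p_k}$ follows by applying $\psi_{p_k}$ to any two preimages.

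The one point needing genuine care — the main obstacle — is the degenerate step flagged in the Remark, where the cover $p_b$ of $p_a$ of smallest value is $b=-a$ (equivalently $p_a<_P p_{\bar a}$, with $n$ sitting below $-n$). There the "two" simultaneous swaps of $(p_a,p_b)$ and of $(p_{\bar a},p_{\bar b})$ coincide and count as a single transposition, so the bookkeeping of positions and of the $\overline{[n-1]}$-count must be checked by hand; I would verify that this single swap is still correctly undone by the corresponding single step of $\psi_{p_k}$ and that the trajectory of $n$ (hence the promotion chain, reversed) still passes through it consistently. Once this degenerate case is handled, every remaining step is the type $A$ argument of Lemma~\ref{lem:promotion} verbatim, which is why the statement may reasonably be asserted to follow immediately from the construction.
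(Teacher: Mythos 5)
Your proposal is correct and follows exactly the route the paper intends: the paper omits this proof entirely, stating that it "follows immediately from the construction" by analogy with Lemma~\ref{lem:promotion}, whose proof is precisely your reverse-the-swaps inverse $\psi_{p_k}$ along the promotion chain. Your additional care with the type $B$ symmetry and the degenerate swap $p_b=p_{\bar a}$ fills in the details the paper leaves implicit, and is handled correctly.
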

\begin{defin}\label{def:insertion-typeB}
Let $P$ be a type $B$ poset on $2n$ elements. Then the \textit{insertion map} $f_{P}:B_n\rightarrow\B(P)$ is defined as $f_P(w)=\varphi_{p_{w(n)}}^{P^{(n)}}\circ \cdots \circ \varphi_{p_{w(2)}}^{P^{(2)}} \circ \varphi_{p_{w(1)}}^{P^{(1)}}$
where $P^{(k)}$ is the type $B$ poset on $2k$ elements obtained from $P$ by restricting to elements $p_{w(1)},p_{w(\bar 1)},\ldots,p_{w(k)},p_{w(\bar k)}.$
\end{defin}
\begin{ex}
We show an example of the insertion map $f_P$ (Definition~\ref{def:insertion-typeB}), where $w=4\bar213$ and $P$ is as the type $B$ poset in Figure~\ref{fig:insertion-typeB-example}. Notice that in step $\varphi_{p_1}$, the value $3$ is first assigned to $p_1$ which is a minimal element in $P$, and then moved all the way up to $p_{\bar1}$. 
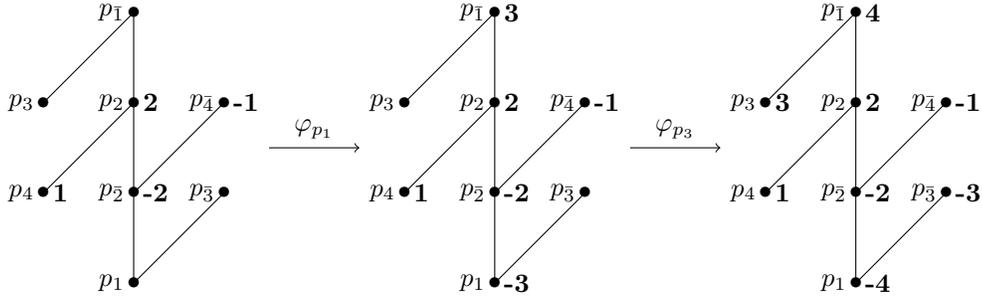
\begin{figure}[h!]
\centering
\begin{tikzpicture}[scale=1.20000000000000]
\node at (0,0) {$\bullet$};
\node[left] at (0,0) {$p_1$};
\node at (0,1) {$\bullet$};
\node[left] at (0,1) {$p_{\bar 2}$};
\node at (0,2) {$\bullet$};
\node[left] at (0,2) {$p_2$};
\node at (0,3) {$\bullet$};
\node[left] at (0,3) {$p_{\bar 1}$};
\node at (-1,2) {$\bullet$};
\node[left] at (-1,2) {$p_3$};
\node at (1,1) {$\bullet$};
\node[left] at (1,1) {$p_{\bar 3}$};
\node at (-1,1) {$\bullet$};
\node[left] at (-1,1) {$p_4$};
\node at (1,2) {$\bullet$};
\node[left] at (1,2) {$p_{\bar 4}$};
\draw(-1,2)--(0,3)--(0,0)--(1,1);
\draw(-1,1)--(0,2);
\draw(1,2)--(0,1);
\node[right] at (0,1) {\textbf{-2}};
\node[right] at (0,2) {\textbf{2}};
\node[right] at (-1,1) {\textbf{1}};
\node[right] at (1,2) {\textbf{-1}};
\node at (4,0) {$\bullet$};
\node[left] at (4,0) {$p_1$};
\node at (4,1) {$\bullet$};
\node[left] at (4,1) {$p_{\bar 2}$};
\node at (4,2) {$\bullet$};
\node[left] at (4,2) {$p_2$};
\node at (4,3) {$\bullet$};
\node[left] at (4,3) {$p_{\bar 1}$};
\node at (3,2) {$\bullet$};
\node[left] at (3,2) {$p_3$};
\node at (5,1) {$\bullet$};
\node[left] at (5,1) {$p_{\bar 3}$};
\node at (3,1) {$\bullet$};
\node[left] at (3,1) {$p_4$};
\node at (5,2) {$\bullet$};
\node[left] at (5,2) {$p_{\bar 4}$};
\draw(3,2)--(4,3)--(4,0)--(5,1);
\draw(3,1)--(4,2);
\draw(5,2)--(4,1);
\node[right] at (4,1) {\textbf{-2}};
\node[right] at (4,2) {\textbf{2}};
\node[right] at (3,1) {\textbf{1}};
\node[right] at (5,2) {\textbf{-1}};
\node[right] at (4,0) {\textbf{-3}};
\node[right] at (4,3) {\textbf{3}};
\node at (8,0) {$\bullet$};
\node[left] at (8,0) {$p_1$};
\node at (8,1) {$\bullet$};
\node[left] at (8,1) {$p_{\bar 2}$};
\node at (8,2) {$\bullet$};
\node[left] at (8,2) {$p_2$};
\node at (8,3) {$\bullet$};
\node[left] at (8,3) {$p_{\bar 1}$};
\node at (7,2) {$\bullet$};
\node[left] at (7,2) {$p_3$};
\node at (9,1) {$\bullet$};
\node[left] at (9,1) {$p_{\bar 3}$};
\node at (7,1) {$\bullet$};
\node[left] at (7,1) {$p_4$};
\node at (9,2) {$\bullet$};
\node[left] at (9,2) {$p_{\bar 4}$};
\draw(7,2)--(8,3)--(8,0)--(9,1);
\draw(7,1)--(8,2);
\draw(9,2)--(8,1);
\node[right] at (8,1) {\textbf{-2}};
\node[right] at (8,2) {\textbf{2}};
\node[right] at (7,1) {\textbf{1}};
\node[right] at (9,2) {\textbf{-1}};
\node[right] at (8,0) {\textbf{-4}};
\node[right] at (8,3) {\textbf{4}};
\node[right] at (9,1) {\textbf{-3}};
\node[right] at (7,2) {\textbf{3}};
\draw[->] (1.50000000000000,1.5)--(2.50000000000000,1.5);
\draw[->] (5.50000000000000,1.5)--(6.50000000000000,1.5);
\node[above] at (2.00000000000000,1.5) {$\varphi_{p_1}$};
\node[above] at (6.00000000000000,1.5) {$\varphi_{p_3}$};
\end{tikzpicture}
\caption{An example of the insertion map $f_P$ for type $B$ poset $P$}
\label{fig:insertion-typeB-example}
\end{figure}
\end{ex}

Note that if $\lambda$ is a type $B$ linear extension of $P$ and we insert $\pi_{\lambda}$ to $P$, then we obtain $\lambda$ back with no swaps applied.

\begin{defin}
Let $U\subset B_n$. Then the poset $P_U$ is defined on $p_1,\ldots,p_n,p_{\bar1},\ldots,p_{\bar n}$ such that $p_i<p_j$ if  $u^{-1}(i)<u^{-1}(j)$ for all $u\in U$.
\end{defin}

The proof of Proposition~\ref{prop:typeB-BW-lin-ext} is identical to that of Proposition~\ref{prop:generalized-BW-lin-ext}.

\begin{prop}\label{prop:typeB-BW-lin-ext}
For $U\subset B_n$, the convex set $\conv(U)$ is exactly $\{\pi_{\lambda}|\lambda\in\B(P_U)\}$. 
\end{prop}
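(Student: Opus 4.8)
The plan is to repeat the proof of Proposition~\ref{prop:generalized-BW-lin-ext} almost verbatim, the only extra work being to keep track of the three types of reflections in $B_n$. First I would apply Proposition~\ref{prop:convex-hull-from-inversions}, so that $\conv(U)$ is exactly the set of $w \in B_n$ with
\[
\bigcap_{u \in U} T_L(u) \subseteq T_L(w) \subseteq \bigcup_{u \in U} T_L(u),
\]
and then unwind this membership condition using the inversion formulas recalled at the start of the section: $t_{i,j} \in T_L(w)$ iff $w^{-1}(i) > w^{-1}(j)$, $t_{i,\bar j} \in T_L(w)$ iff $w^{-1}(i) < w^{-1}(\bar j)$, and $t_{i,\bar i} \in T_L(w)$ iff $w^{-1}(i) < w^{-1}(\bar i)$.

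Working type by type, the left-hand inclusion says that whenever a reflection $t$ lies in $T_L(u)$ for every $u \in U$, the corresponding strict inequality among the values of $w^{-1}$ must hold; the right-hand inclusion says that whenever $t \notin T_L(u)$ for every $u \in U$, the reversed inequality must hold for $w^{-1}$. Assembling the three cases, this is precisely the statement that $w \in \conv(U)$ if and only if, for every ordered pair $a \neq b$ in $\overline{[n]}$, the hypothesis ``$u^{-1}(a) < u^{-1}(b)$ for all $u \in U$'' forces $w^{-1}(a) < w^{-1}(b)$. (Here one uses that for any $w \in B_n$ one has $w^{-1}(a) < w^{-1}(b) \iff w^{-1}(\bar b) < w^{-1}(\bar a)$; this same symmetry shows that $P_U$ is a legitimate type $B$ poset in the sense of Definition~\ref{def:typeB-poset}.)

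It then remains to recognize the right-hand side as $\{\pi_\lambda : \lambda \in \B(P_U)\}$. Under the bijection $\lambda \leftrightarrow \pi_\lambda$ one has $\lambda(p_j) = \pi_\lambda^{-1}(j)$, so $\lambda$ being order-preserving on $P_U$ means exactly that $p_a <_{P_U} p_b$---equivalently $u^{-1}(a) < u^{-1}(b)$ for all $u$---implies $\pi_\lambda^{-1}(a) < \pi_\lambda^{-1}(b)$, which is the criterion for $\pi_\lambda \in \conv(U)$ established above; and the antipodal requirement $\lambda(p_i) = -\lambda(p_{\bar i})$ of Definition~\ref{def:typeB-linear-extension} is automatic because $\pi_\lambda \in B_n$. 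This yields the claimed equality.

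I do not expect a genuine obstacle: the argument is bookkeeping once the type $B$ inversion formulas are in hand. The only place to be slightly careful is the assembly of the three reflection-type cases into the single clean condition over pairs $a \neq b \in \overline{[n]}$, and noticing that the $\pm$-symmetry of signed permutations makes the sign-compatibility condition on type $B$ linear extensions come for free rather than being an additional constraint.
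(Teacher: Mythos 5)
Your argument is correct and is exactly the route the paper takes: the paper simply declares the proof ``identical to that of Proposition~\ref{prop:generalized-BW-lin-ext}'', i.e.\ apply Proposition~\ref{prop:convex-hull-from-inversions} and unwind the type $B$ inversion criteria, which is what you do. Your extra bookkeeping (assembling the three reflection types into one condition over pairs in $\overline{[n]}$ and noting the sign-compatibility is automatic) is the content the paper leaves implicit, and it checks out.
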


As in Section~\ref{sec:permutations}, for $w\in B_n$, we also write $P_w=P_{\{\id,w\}}$ and write $wP_u=P_{w,wu}$ which is $P_u$ with its indices permuted according to $w$.

\begin{theorem}\label{thm:typeB}
For any $w,u\in B_n$, we have an injection:
$$\B(P_{\{\id,w,wu\}})\hookrightarrow \B(P_w)\times\B(wP_u),\text{ given by } \lambda\mapsto \big(f_{P_w}(\pi_\lambda),f_{wP_u}(\pi_\lambda)\big).$$
\end{theorem}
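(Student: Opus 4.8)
The plan is to adapt the proof of Theorem~\ref{thm:typeA} essentially verbatim, replacing the index set $[n]$ by $\overline{[n]}$ and carrying the pairing $p_i\leftrightarrow p_{\bar i}$ through every step. We induct on $n$, the base case $n=1$ being a short direct check. Write $P=P_{\{\id,w,wu\}}$, a type $B$ poset on $2n$ elements, and let $A$ be the set of its maximal elements; this is an antichain, and $p_a\in A$ exactly when $p_{\bar a}$ is minimal in $P$. For any $\lambda\in\B(P)$ the value $n$ sits at some maximal $p_a$, forcing $\lambda(p_{\bar a})=-n$ at the minimal $p_{\bar a}$; removing both yields a type $B$ poset on $2(n-1)$ elements and a bijective partition
\[
\B(P)=\bigsqcup_{p_a\in A}\{\lambda\in\B(P)\,:\,\lambda(p_a)=n\}\;\simeq\;\bigsqcup_{p_a\in A}\B(P\setminus\{p_a,p_{\bar a}\}).
\]

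Fix $p_a\in A$. Deleting from $w$ the symmetric pair of inputs $\{w^{-1}(a),w^{-1}(\bar a)\}$ and the symmetric pair of outputs $\{a,\bar a\}$, then relabeling in an order- and sign-preserving way, yields $w'\in B_{n-1}$; similarly $u$ gives $u'\in B_{n-1}$. As in Theorem~\ref{thm:typeA}, the restrictions are arranged so that $P_{w'}=P_w\setminus\{p_a,p_{\bar a}\}$, $w'P_{u'}=wP_u\setminus\{p_a,p_{\bar a}\}$, and $P_{\{\id,w',w'u'\}}=P\setminus\{p_a,p_{\bar a}\}$. Since $\lambda(p_a)=n$ forces $\pi_\lambda(n)=a$, Definition~\ref{def:insertion-typeB} identifies $f_{P_w}$ on this piece with $\varphi_{p_a}^{P_w}\circ f_{P_{w'}}$ and $f_{wP_u}$ with $\varphi_{p_a}^{wP_u}\circ f_{w'P_{u'}}$ (after the evident relabeling). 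Thus $\lambda\mapsto(f_{P_w}(\pi_\lambda),f_{wP_u}(\pi_\lambda))$, restricted to the part indexed by $p_a$, is the type $B$ injection for $(w',u')$ from $\B(P\setminus\{p_a,p_{\bar a}\})$ into $\B(P_w\setminus\{p_a,p_{\bar a}\})\times\B(wP_u\setminus\{p_a,p_{\bar a}\})$ --- injective by the inductive hypothesis --- followed by $(\varphi_{p_a}^{P_w},\varphi_{p_a}^{wP_u})$. So it suffices to prove that the union over $p_a\in A$ of the maps $(\varphi_{p_a}^{P_w},\varphi_{p_a}^{wP_u})$ is injective.

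Suppose $\varphi_{p_a}^{P_w}(\alpha)=\varphi_{p_{a'}}^{P_w}(\alpha')=\lambda$ and $\varphi_{p_a}^{wP_u}(\beta)=\varphi_{p_{a'}}^{wP_u}(\beta')=\tau$. If $a=a'$ then $\alpha=\alpha'$ and $\beta=\beta'$ by Lemma~\ref{lem:promotion-typeB}; so assume $a\neq a'$ and, by symmetry, that $a<a'$ in the order on $\overline{[n]}$. By Lemma~\ref{lem:promotion-typeB}, $p_a$ and $p_{a'}$ both lie on the promotion chain of $\lambda$ in $P_w$ and on that of $\tau$ in $wP_u$, so they are comparable in $P_w$ and in $wP_u$. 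From $a<a'$ and comparability in $P_w=P_{\{\id,w\}}$ we get $p_a<_{P_w}p_{a'}$, hence $w^{-1}(a)<w^{-1}(a')$; then comparability in $wP_u=P_{\{w,wu\}}$ forces $p_a<_{wP_u}p_{a'}$, hence $(wu)^{-1}(a)<(wu)^{-1}(a')$. Now $a<a'$, $w^{-1}(a)<w^{-1}(a')$, and $(wu)^{-1}(a)<(wu)^{-1}(a')$ together give $p_a<_P p_{a'}$ by the definition of $P_{\{\id,w,wu\}}$, contradicting $p_a\in A$. This closes the induction, and with Proposition~\ref{prop:typeB-BW-lin-ext} it yields the strong hull property for $\cay(B_n)$ exactly as Corollary~\ref{cor:typeA} did for $\S_n$, completing Theorem~\ref{thm:types-A-and-B}.

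No genuinely new difficulty arises beyond Theorem~\ref{thm:typeA}; the work is all in the type-$B$ bookkeeping. The main points to be careful about are: deleting a maximal $p_a$ must also delete the minimal $p_{\bar a}$, so the recursion stays within type $B$; the maps $\varphi_{p_a}^P$ and $\varphi_{p_{\bar a}}^P$ are distinct but feed distinct parts of the partition (which element receives value $n$), so no collision occurs there; the promotion chain of Definition~\ref{def:promotion-chain} couples the motions of $n$ and $-n$, already packaged by Lemma~\ref{lem:promotion-typeB}; and the final comparability argument must be checked uniformly in the signs of $a,a'$ (note that even when $a'=\bar a$ the conclusion $p_a<_P p_{a'}$ still contradicts maximality of $p_a$). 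I expect keeping the relabelings defining $w',u'$ consistent to be the only place where care is genuinely required.
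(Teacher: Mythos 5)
Your proposal is correct and follows essentially the same route as the paper's proof: induction on $n$, partitioning $\B(P)$ by the maximal element receiving value $n$ (deleting $p_a,p_{\bar a}$ together), identifying the insertion map with $\varphi_{p_a}\circ f_{P_{w'}}$, and deriving a contradiction with maximality via promotion chains and comparability in $P_w$ and $wP_u$. The subtleties you flag (the case $a'=\bar a$, and $\varphi_{p_a}\neq\varphi_{p_{\bar a}}$ feeding distinct parts of the partition) are exactly the ones the paper notes as well.
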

\begin{proof}
We use induction on $n$. Write $P=P_{\{\id,w,wu\}}$ for simplicity. The base case is $n=1$ where $B_1$ consists of two elements: $\id$, which is 1 in one-line notation, and $t_{1,\bar1}$, which is $\bar1$ in one-line notation. If $w=\id$, then $P=P_u$ so the above map is an isomorphism onto the second factor and if $w=\bar1$, then $P=P_w$ so the above map is an isomorphism onto the first factor. In both cases, we obtain an injection. Now assume that the claim is true for $n-1$. 

Let $A$ be the set of maximal elements of $P$. For $\lambda\in\B(P)$, $\lambda^{-1}(n)\in A$ so this provides us with a natural partition of $\B(P)$:
$$\B(P)=\bigsqcup_{p_a\in A}\{\lambda\in\B(P)\:|\:\lambda(p_a)=n\}\simeq\bigsqcup_{p_a\in A}\B(P\setminus\{p_a,p_{\bar a}\}).$$
Notice that unlike in type $A$, we now need to take out $p_a$ and $p_{\bar a}$ simultaneously, where $\lambda(p_{\bar a})=\bar n$ if $\lambda(p_a)=n$. It is possible that both $a$ and $\bar a$ belong to $A$, in which case $\B(P\setminus\{p_a,p_{\bar a}\})$ contributes twice in the above partition. However, these two parts will be treated differently.

For each $p_a\in A$, consider $w':\overline{[n]}\setminus\{w^{-1}(a),w^{-1}(\bar a)\}\rightarrow\overline{[n]}\setminus\{a,\bar a\}$ restricted from $w$ and $u'\overline{[n]}\setminus\{u^{-1}w^{-1}(a),u^{-1}w^{-1}(\bar a)\}\rightarrow\overline{[n]}\setminus\{w^{-1}(a),w^{-1}(\bar a)\}$ restricted from $u$. As the relative ordering of the other indices stays unchanged, we see that $P_{w}\setminus\{p_a,p_{\bar a}\}$ is $P_{w'}$, $wP_u\setminus\{p_a,p_{\bar a}\}$ is $u'P_{w'}$, and $P\setminus\{p_a,p_{\bar a}\}$ is $P_{\{\id,w',w'u'\}}$, viewing $w',u'\in B_{n-1}$. Thus, by the induction hypothesis, we have an injection
$$\bigsqcup_{p_a\in A}\B(P\setminus\{p_a,p_{\bar a}\})\hookrightarrow\bigsqcup_{p_a\in A}\B(P_w\setminus\{p_a,p_{\bar a}\})\times\B(wP_u\setminus\{p_a,p_{\bar a}\})$$ and it suffices to show that 
$$\varphi:\bigsqcup_{p_a\in A}\B(P_w\setminus\{p_a,p_{\bar a}\})\times\B(wP_u\setminus\{p_a,p_{\bar a}\})\rightarrow\B(P_w)\times\B(wP_u)$$
is an injection, where $\varphi$ acts as $\varphi_{p_a}$ on each part indexed by $p_a$. 

Assume to the contrary that $(\lambda,\tau)\in\B(P_w)\times\B(wP_u)$ lies in the image of both $\varphi_{p_a}$ and $\varphi_{p_{a'}}$. By Lemma~\ref{lem:promotion-typeB}, $\varphi_{p_a}$ is injective so $a\neq a'$. Moreover, both $p_a$ and $p_{a'}$ are in the promotion chains of both $\lambda$ and $\tau$ so in particular they are comparable in both $P_w$ and $wP_u$. Assume without loss of generality that $a<a'$. Then we must have $w^{-1}(a)<w^{-1}(a')$ and further $(wu)^{-1}(a)<(wu)^{-1}(a')$ so $p_a<_P p_{a'}$ in $P$, contradicting the maximality of $p_a$.
\end{proof}

The hull property then follows from Theorem~\ref{thm:typeB} and Proposition~\ref{prop:typeB-BW-lin-ext} via the same argument given in the proof of Corollary~\ref{cor:typeA}.

\begin{cor}
The Cayley graph $\cay(B_n)$ has the strong hull property.
\end{cor}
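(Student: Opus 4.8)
The plan is to reduce the corollary to Theorem~\ref{thm:typeB} by exactly the argument used for $\cay(\S_n)$ in Corollary~\ref{cor:typeA}, so the only work is bookkeeping: translating the three given group elements to a normalized triple and matching each resulting convex set with the set of type $B$ linear extensions of the appropriate poset. First I would note that for any fixed $g\in B_n$, left multiplication $x\mapsto gx$ is an automorphism of the right Cayley graph $\cay(B_n)$, since it sends an edge $\{x,xs\}$ (with $s\in S$) to the edge $\{gx,gxs\}$. Hence $g$ preserves shortest-path distances, preserves convexity, and commutes with taking convex hulls, so $\left|\conv(gx_1,\ldots,gx_d)\right|=\left|\conv(x_1,\ldots,x_d)\right|$ for all $x_1,\ldots,x_d\in B_n$. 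In particular $\left|\conv(u,v)\right|=\left|\conv(\id,u^{-1}v)\right|$.

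Next, given arbitrary $u,v,w\in B_n$ we apply the automorphism $x\mapsto u^{-1}x$ and set $a\coloneqq u^{-1}v$ and $b\coloneqq a^{-1}(u^{-1}w)=v^{-1}w$, so that $\left|\conv(u,v,w)\right|=\left|\conv(\id,a,ab)\right|$, $\left|\conv(u,v)\right|=\left|\conv(\id,a)\right|$, and $\left|\conv(v,w)\right|=\left|\conv(\id,b)\right|=\left|\conv(a,ab)\right|$. Thus it suffices to prove
\[
\left|\conv(\id,a)\right|\cdot\left|\conv(a,ab)\right|\ \geq\ \left|\conv(\id,a,ab)\right|
\]
for all $a,b\in B_n$, which is the normalized form parallel to the proof of Corollary~\ref{cor:typeA} (with $a,b$ playing the roles of $w,u$ there). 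By Proposition~\ref{prop:typeB-BW-lin-ext}, $\conv(\id,a)=\{\pi_\lambda\mid\lambda\in\B(P_a)\}$, $\conv(a,ab)=\{\pi_\lambda\mid\lambda\in\B(P_{\{a,ab\}})\}=\{\pi_\lambda\mid\lambda\in\B(aP_b)\}$, and $\conv(\id,a,ab)=\{\pi_\lambda\mid\lambda\in\B(P_{\{\id,a,ab\}})\}$; since $\lambda\mapsto\pi_\lambda$ is a bijection onto the corresponding convex set in each case, these cardinalities equal $\left|\B(P_a)\right|$, $\left|\B(aP_b)\right|$, and $\left|\B(P_{\{\id,a,ab\}})\right|$ respectively. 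The desired inequality is then immediate from the injection $\B(P_{\{\id,a,ab\}})\hookrightarrow\B(P_a)\times\B(aP_b)$ of Theorem~\ref{thm:typeB}.

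I do not expect any genuine obstacle here: all of the combinatorial difficulty (defining the type $B$ insertion map, proving injectivity of $\varphi^P_{p_k}$ in Lemma~\ref{lem:promotion-typeB}, and running the induction in Theorem~\ref{thm:typeB}) has already been carried out. The only points requiring a modicum of care are (i) checking that left multiplication really is a graph automorphism of the \emph{right} Cayley graph, so that all the cardinalities above are translation-invariant, and (ii) correctly identifying $\conv(a,ab)$ with $\B(aP_b)$ rather than $\B(P_b)$ — i.e.\ keeping track of the relabeling of the poset by $a$. With those in hand the corollary follows verbatim from the $\S_n$ argument.
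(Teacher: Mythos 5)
Your proposal is correct and follows exactly the paper's argument: the paper deduces this corollary from Theorem~\ref{thm:typeB} and Proposition~\ref{prop:typeB-BW-lin-ext} ``via the same argument given in the proof of Corollary~\ref{cor:typeA},'' which is precisely the translation-invariance reduction to the triple $(\id,a,ab)$ that you spell out. Your write-up simply makes explicit the details the paper leaves implicit.
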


\begin{remark}
Our framework for signed permutations will not deduce that the type $D$ Cayley graph has the strong hull property. Since in type $D_n$ we no longer have inversions of the form $t_{i,\bar i}$, it is possible that $p_a$ and $p_{\bar a}$ are comparable in both $P_w$ and $wP_u$, but not comparable in $P_{\{\id,w,wu\}}$. 
\end{remark}

Writing $e_B(P)$ for the number of type $B$ linear extensions of the type $B$ poset $P$, we can deduce a direct analog of Sidorenko's inequality (\ref{eq:sidorenko}).

\begin{cor}
For any $w \in B_n$ we have
\[
e_B(P_w) e_B(P_{ww_0}) \geq 2^n n!.
\]
\end{cor}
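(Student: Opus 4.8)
The plan is to derive this corollary exactly as Sidorenko's inequality~(\ref{eq:sidorenko}) is derived from the hull property in the Example of the introduction, now using the strong hull property for $\cay(B_n)$ just established. Concretely, I would apply~(\ref{eq:strong-hull}) for $\cay(B_n)$ to the triple $\{w,\id,ww_0\}$, where $w_0$ denotes the longest element of $B_n$ (the signed permutation $i \mapsto -i$), with $\id$ playing the role of the repeated middle element.

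The one point needing a short argument is that $\conv(\{w,\id,ww_0\}) = B_n$, so that the right-hand side of the strong hull inequality equals $|B_n| = 2^n n!$. For this it suffices to see $\conv(\{w,ww_0\}) = B_n$, since enlarging a generating set only enlarges its hull and a convex subset of $B_n$ of full cardinality must be all of $B_n$. I would get this in either of two ways. First, left multiplication by $w^{-1}$ is an automorphism of $\cay(B_n)$, so $|\conv(\{w,ww_0\})| = |\conv(\{\id,w_0\})| = |[\id,w_0]_R| = |B_n|$, the last equality because $w_0$ is the maximum of the right weak order (Proposition~\ref{prop:weak-order-from-inversions}). Alternatively, from Proposition~\ref{prop:convex-hull-from-inversions} one checks directly that $T_L(ww_0) = T \setminus T_L(w)$: using the standard identity $\ell(xw_0) = \ell(w_0) - \ell(x)$ for all $x$, a reflection $t$ lies in $T_L(ww_0)$ iff $\ell(tww_0) = \ell(w_0) - \ell(tw) < \ell(w_0) - \ell(w) = \ell(ww_0)$, i.e.\ iff $t \notin T_L(w)$; hence the union of the inversion sets of $w,\id,ww_0$ is $T$ and their intersection is empty.

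With that in hand, the strong hull property gives
\[
|\conv(\{w,\id\})|\cdot|\conv(\{\id,ww_0\})| \ \geq\ |\conv(\{w,\id,ww_0\})| \ =\ 2^n n!,
\]
and it remains only to identify the two factors. By Proposition~\ref{prop:typeB-BW-lin-ext}, $\conv(\{\id,w\}) = \{\pi_\lambda \mid \lambda \in \B(P_{\{\id,w\}})\} = \{\pi_\lambda \mid \lambda \in \B(P_w)\}$, and since $\lambda \mapsto \pi_\lambda$ is injective this set has size $e_B(P_w)$; likewise $|\conv(\{\id,ww_0\})| = e_B(P_{ww_0})$. Substituting yields $e_B(P_w)\,e_B(P_{ww_0}) \geq 2^n n!$. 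There is essentially no obstacle here: every ingredient is already proved, and the only thing worth verifying carefully is the identification $\conv(\{w,\id,ww_0\}) = B_n$, i.e.\ that $ww_0$ is the weak-order ``complement'' of $w$.
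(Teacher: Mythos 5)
Your proposal is correct and follows essentially the same route as the paper: apply the (strong) hull property to the triple $(w,\id,ww_0)$, identify the two pairwise hulls with $e_B(P_w)$ and $e_B(P_{ww_0})$ via Proposition~\ref{prop:typeB-BW-lin-ext}, and note that the triple hull is all of $B_n$. The only difference is that you spell out the verification $T_L(ww_0)=T\setminus T_L(w)$, which the paper takes as known.
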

\begin{proof}
By Proposition~\ref{prop:typeB-BW-lin-ext} we have that $e_B(P_w)=|\conv(\id,w)|$ and similarly that $e_B(P_{ww_0})=|\conv(\id,ww_0)|$. The hull property thus gives that the product is at least 
\[
|\conv(\id,w,ww_0)|=|B_n|=2^nn!.
\]
\end{proof}

\section{Right-angled Coxeter groups}\label{sec:right-angled}

A Coxeter group $W$ is called \emph{right-angled} if all entries $m_{ij}$ of the corresponding Coxeter matrix lie in $\{1,2,\infty\}$.

\begin{prop}
\label{prop:RA-meet-is-intersection}
Let $W$ be a right-angled Coxeter group and $u,v \in W$, then
\[
T_L(u \land_R v) = T_L(u) \cap T_L(v).
\]
\end{prop}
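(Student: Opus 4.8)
## Proof proposal for Proposition~\ref{prop:RA-meet-is-intersection}

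The plan is to prove the two inclusions separately. The inclusion $T_L(u \land_R v) \subseteq T_L(u) \cap T_L(v)$ is immediate and holds in any Coxeter group: since $u \land_R v \leq_R u$ and $u \land_R v \leq_R v$, Proposition~\ref{prop:weak-order-from-inversions} gives $T_L(u \land_R v) \subseteq T_L(u)$ and $T_L(u \land_R v) \subseteq T_L(v)$, hence the containment in the intersection. All the content is in the reverse inclusion, which is where right-angledness must be used: it fails in general (e.g. in $\S_3$, taking $u = s_1 s_2$ and $v = s_2 s_1$, both have $s_2$-type structure but the intersection of inversion sets can fail to be the inversion set of the meet).

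For the reverse inclusion, I would argue as follows. Set $D := T_L(u) \cap T_L(v)$. By Tits' theorem (Theorem~\ref{thm:tits-convexity}) or directly by Proposition~\ref{prop:convex-hull-from-inversions}, the key point is to show that $D$ is itself the inversion set of some element $z \in W$; once we know $D = T_L(z)$, then $z \leq_R u$ and $z \leq_R v$ by Proposition~\ref{prop:weak-order-from-inversions}, so $z \leq_R u \land_R v$, giving $D = T_L(z) \subseteq T_L(u \land_R v)$, and combined with the easy inclusion we get equality (and incidentally $z = u \land_R v$). So the real statement to prove is: \emph{in a right-angled Coxeter group, the intersection of two inversion sets is again an inversion set.}

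To show $D = T_L(u) \cap T_L(v)$ is an inversion set, I would use the characterization of inversion sets among subsets of $T$: a set $A \subseteq T$ is of the form $T_L(w)$ if and only if $A$ is "biconvex" (closed and coclosed) — equivalently, $A = T_L(w)$ for some $w$ iff for every "dihedral" sub-root-system (i.e. every rank-2 parabolic, or every pair of reflections generating a finite or infinite dihedral group), $A$ restricted to that subsystem is an inversion set of the corresponding dihedral reflection subgroup. The point of right-angledness is that every rank-2 standard parabolic is either $\S_2 \times \S_2$ (when $m_{ij} = 2$, so the two reflections commute) or the infinite dihedral group $I_2(\infty)$ (when $m_{ij} = \infty$). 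In the commuting case, inversion sets of the rank-2 parabolic are exactly the $4$ subsets of the $2$-element set $\{s_i, s_j\}$ — all subsets — so intersection is trivially closed. In the $I_2(\infty)$ case, the reflection subgroup generated by $s_i, s_j$ has reflections forming a "line" (a bi-infinite sequence), and inversion sets are exactly the "prefixes" of this line from either end (i.e. initial or final segments); the intersection of two such segments is again a segment of the same type, so again closed. More precisely, I would work with the root system / the biclosedness criterion: $D$ is an inversion set iff $D$ and $T \setminus D$ are both "closed" under the operation $t_1, t_2 \mapsto$ (the reflections lying between $t_1$ and $t_2$ in the rank-2 subsystem they span). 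Since $D = T_L(u) \cap T_L(v)$, closedness of $D$ follows from closedness of $T_L(u)$ and $T_L(v)$ intersected — but the subtlety is coclosedness of $D$, equivalently closedness of $T \setminus D = (T \setminus T_L(u)) \cup (T \setminus T_L(v))$, a \emph{union}, which is not automatic. This is where the rank-2 analysis genuinely enters: in each rank-2 subsystem, because it is of type $A_1 \times A_1$ or $\widetilde{A}_1$, one checks by hand that the union of two coclosed sets, intersected back with the constraint coming from closedness of $D$ on that subsystem, still yields something coclosed.

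The main obstacle is precisely this coclosedness (equivalently, handling the union $T\setminus D$): I expect the proof to proceed by reducing, via the standard "look at every rank-$2$ reflection subgroup" principle, to an explicit finite check in $A_1 \times A_1$ (trivial, all $4$ subsets are inversion sets) and in the infinite dihedral group (where inversion sets are initial or terminal segments of a bi-infinite chain of reflections, and one must verify that intersecting two such segments that each arise as $T_L(u)\cap(\text{subgroup})$ gives a segment again). I would structure the write-up as: (1) easy inclusion; (2) reduce the hard inclusion to "$T_L(u)\cap T_L(v)$ is an inversion set"; (3) reduce "is an inversion set" to a rank-$2$ biclosedness check via the dihedral-subgroup criterion; (4) dispatch the two rank-$2$ cases allowed by right-angledness. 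An alternative, possibly cleaner route avoiding root-system machinery: induct on $\ell(u) + \ell(v)$, picking $s \in S$ with $s \in T_L(u) \cap T_L(v)$ if one exists — then $s \leq_R u \land_R v$ and one can quotient by $s$ on the left and recurse; if no such simple reflection exists, show directly using Proposition~\ref{prop:inversions-from-prefixes} and the right-angled relations that $T_L(u) \cap T_L(v) = \emptyset$, so $u \land_R v = \id$ and we are done. The no-common-simple-left-descent case is then the crux, and right-angledness is what forces the intersection to be empty there.
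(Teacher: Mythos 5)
Your two preliminary reductions are correct and match the paper's skeleton: the inclusion $T_L(u\land_R v)\subseteq T_L(u)\cap T_L(v)$ is immediate from Proposition~\ref{prop:weak-order-from-inversions}, and all the content sits in the case $u\land_R v=\id$, where one must show $T_L(u)\cap T_L(v)=\emptyset$. The problem is that both of your routes stop exactly at this point. In your ``alternative, cleaner route'' the entire difficulty is compressed into the sentence ``right-angledness is what forces the intersection to be empty there,'' with no argument given. The paper's proof supplies precisely this missing step: if $t\in T_L(u)\cap T_L(v)$, then Proposition~\ref{prop:inversions-from-prefixes} writes $t=asa^{-1}=bs'b^{-1}$ for prefixes $a,b$ of reduced words for $u,v$ and simple reflections $s,s'$; since in a right-angled group no two distinct simple reflections are conjugate (all off-diagonal $m_{ij}$ are even or infinite), we get $s=s'$; and since by Brink's theorem the centralizer of $s$ is the \emph{standard parabolic} $W_J$ with $J=\{s_i : s_is=ss_i\}$, the relation $b^{-1}a\in W_J$ forces either $s\leq_R u,v$ (if $a,b\in W_J$) or $\id\neq b^J\leq_R a\leq_R u$ together with $b^J\leq_R b\leq_R v$ --- in either case a nontrivial common lower bound, contradicting $u\land_R v=\id$. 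This centralizer argument is the one idea the proposition genuinely needs, and it is absent from your proposal.

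Your first route (show that $T_L(u)\cap T_L(v)$ is biclosed, hence an inversion set) is a genuinely different and in principle viable strategy, but as sketched it has two gaps. First, the rank-two criterion must be applied to \emph{all} dihedral reflection subgroups $\langle t_\alpha,t_\beta\rangle$ with $t_\alpha,t_\beta\in T$, not only to the standard rank-two parabolics $W_{\{s_i,s_j\}}$; your justification via the entries $m_{ij}$ of the Coxeter matrix covers only the latter. It is true that every dihedral reflection subgroup of a right-angled Coxeter group is of type $A_1\times A_1$ or $\widetilde{A}_1$ (every torsion element has order at most $2$, since finite subgroups are conjugate into finite standard parabolics, which here are elementary abelian $2$-groups), but this must be proved or cited; it does not follow from inspecting the Coxeter matrix. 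Second, the route leans on two substantial external theorems the paper never develops: that a \emph{finite} biclosed subset of $T$ in an arbitrary, possibly infinite, Coxeter group is the inversion set of an element, and that the restriction of $T_L(w)$ to a reflection subgroup $W'$ is an inversion set for $W'$ in its own Coxeter structure (both due to Dyer). With those in hand your rank-two case check is fine, but without them the argument is not self-contained, and the paper's centralizer proof is considerably more economical.
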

\begin{proof}
It suffices to prove the case when $u \land_R v = \id$ (so $T_L(u \land_R v)=\emptyset$).  In this case, suppose for the sake of contradiction that there is some $t \in T_L(u) \cap T_L(v)$; by Proposition~\ref{prop:inversions-from-prefixes} this means that there are reduced expressions $u=s_{i_1}\cdots s_{i_{k}}$ and $v=s_{j_1} \cdots s_{j_{\ell}}$ and indices $1 \leq k' \leq k, 1 \leq \ell' \leq \ell$ with
\[
as_{i_{k'}}a^{-1}=bs_{j_{\ell'}}b^{-1}
\]
where $a=s_{i_1}\cdots s_{i_{k'-1}}$ and $b=s_{j_1}\cdots s_{j_{\ell'-1}}$.  Since $W$ is right-angled, no two distinct simple reflections are conjugate (see, e.g., page 23 of \cite{Bjorner-Brenti}) so we must have
\[
s_{i_{k'}}=s_{j_{\ell'}}\coloneqq s.
\]

Now, let $C(s)$ denote the centralizer of $s$ in $W$; it follows from \cite{Brink} that $C(s)=W_J$, where $J$ consists of those simple reflections commuting with $s$.  At most one of $a,b$ lies in $C(s)$, otherwise we would have $s \leq_R u,v$, contradicting the fact that $u \land_R v=\id$, so suppose without loss of generality that $b \not \in C(s)=W_J$.  Since $b^{-1}a$ commutes with $s$, we have $a \in bW_J$, and thus $a \geq_R b^J \neq \id$.  But this means that $b^J \leq_R a \leq_R u$ and $b^J \leq_R b \leq_R v$, contradicting the fact that $u \land_R v = \id$.
\end{proof}

\begin{theorem}
\label{thm:right-angled-has-metric}
Let $W$ be a right-angled Coxeter group, then for any $u,v \in W$ we have an injection
\begin{align*}
    \phi: \conv(\id,u,v) &\to [\id,u]_R \times [\id,v]_R \\
    x &\mapsto (x \land_R u, x \land_R v).
\end{align*}
\end{theorem}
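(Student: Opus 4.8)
The plan is to verify directly that $\phi$ is well-defined and injective by working entirely with inversion sets, using Proposition~\ref{prop:convex-hull-from-inversions} to describe $\conv(\id,u,v)$ and $[\id,u]_R$, $[\id,v]_R$, Proposition~\ref{prop:weak-order-from-inversions} to translate weak-order membership into inclusions of inversion sets, and Proposition~\ref{prop:RA-meet-is-intersection} to compute the meets appearing in the definition of $\phi$. First I would check well-definedness: if $x \in \conv(\id,u,v)$ then $T_L(x) \subseteq T_L(u) \cup T_L(v)$, and by Proposition~\ref{prop:RA-meet-is-intersection} we have $T_L(x \land_R u) = T_L(x) \cap T_L(u) \subseteq T_L(u)$, so $x \land_R u \leq_R u$, i.e.\ $x \land_R u \in [\id,u]_R$; similarly for $v$. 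So $\phi$ lands in the claimed target.

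The heart is injectivity. Suppose $\phi(x) = \phi(y)$, so $x \land_R u = y \land_R u$ and $x \land_R v = y \land_R v$. Using Proposition~\ref{prop:RA-meet-is-intersection} this says $T_L(x) \cap T_L(u) = T_L(y) \cap T_L(u)$ and $T_L(x) \cap T_L(v) = T_L(y) \cap T_L(v)$. Taking the union of these two equalities gives
\[
T_L(x) \cap \big(T_L(u) \cup T_L(v)\big) = T_L(y) \cap \big(T_L(u) \cup T_L(v)\big).
\]
But $x \in \conv(\id,u,v)$ forces $T_L(x) \subseteq T_L(u) \cup T_L(v)$, so the left side is just $T_L(x)$; likewise the right side is $T_L(y)$. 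Hence $T_L(x) = T_L(y)$, and since inversion sets determine elements of a Coxeter group (two elements with the same left inversion set are equal — this follows, e.g., from Proposition~\ref{prop:weak-order-from-inversions} applied in both directions), we get $x = y$.

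The only real subtlety — and the step I expect to be the main obstacle — is making sure Proposition~\ref{prop:RA-meet-is-intersection} is applicable exactly where I use it: it is stated for arbitrary pairs $u,v \in W$, so computing $T_L(x \land_R u)$ and $T_L(x \land_R v)$ is immediate, but one should double-check that the meet $x \land_R u$ exists, which is guaranteed since weak order on any Coxeter group is a meet-semilattice (Bj\"orner's theorem, quoted in the excerpt). Everything else is a routine set-theoretic manipulation: intersecting with a union, and invoking that $T_L(\cdot)$ is injective. I would also remark that this argument in fact shows the strong hull inequality $|\conv(\id,u,v)| \leq |[\id,u]_R| \cdot |[\id,v]_R|$, which together with the reduction in Corollary~\ref{cor:typeA} (using symmetry of the Cayley graph to replace an arbitrary triple $(p,q,r)$ by $(\id, p^{-1}q, p^{-1}r)$) yields Theorem~\ref{thm:right-angled-case}.
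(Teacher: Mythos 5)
Your proof is correct and follows essentially the same route as the paper: translate everything into inversion sets via Proposition~\ref{prop:convex-hull-from-inversions}, use Proposition~\ref{prop:RA-meet-is-intersection} to identify the meets with intersections, and conclude by the set-theoretic fact that a subset of $T_L(u)\cup T_L(v)$ is determined by its intersections with $T_L(u)$ and $T_L(v)$. The only difference is that you spell out the reconstruction step and the well-definedness check slightly more explicitly than the paper does.
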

\begin{proof}
By Proposition~\ref{prop:convex-hull-from-inversions} we have
\begin{align*}
    \conv(\id,u,v)&=\{ x \in W \: | \: T_L(x) \subseteq T_L(u) \cup T_L(v)\} \\
    \conv(\id,u)&=\{ x \in W \: | \: T_L(x) \subseteq T_L(u)\} \\
    \conv(\id,v)&=\{ x \in W \: | \: T_L(x) \subseteq T_L(v)\} 
\end{align*}
and by Proposition~\ref{prop:RA-meet-is-intersection} we know that $\land_R$ is given by intersection of (left) inversion sets.  Thus the injectivity of $\phi$ follows from the obvious fact that a subset $X \subseteq U \cup V$ may be uniquely reconstructed given $X \cap U$ and $X \cap V$ for any sets $U,V$.
\end{proof}

\begin{cor}
\label{cor:right-angled-strong}
Let $W$ be right-angled, then $\cay(W)$ has the strong hull property.
\end{cor}

\section{Graphical arrangements}\label{sec:graphical}

An \emph{acyclic orientation} of a simple undirected graph $G$ is a choice of direction for each edge of $G$ such that there are no directed cycles.  We write $\ac(G)$ for the set of acyclic orientations of $G$, and endow $\ac(G)$ itself with a graph structure by taking $o,o' \in \ac(G)$ to be adjacent if the orientations differ on exactly one edge of $G$.

\begin{prop}
\label{prop:weak-order-acyclic-orientations}
Let $K_n$ denote the complete graph on $n$ vertices, then
\[
\ac(K_n) \cong \cay(\S_n).
\]
\end{prop}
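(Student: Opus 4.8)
The plan is to exhibit an explicit bijection between acyclic orientations of $K_n$ and elements of $\S_n$ that is an isomorphism of graphs, i.e.\ that carries adjacency in $\ac(K_n)$ to covers in $\cay(\S_n)$. The natural candidate is the following: identify the vertex set of $K_n$ with $[n]$, and to a permutation $w \in \S_n$ associate the orientation $o_w$ in which the edge $\{i,j\}$ with $i<j$ is oriented $i \to j$ precisely when $w^{-1}(i) < w^{-1}(j)$ (equivalently, $i$ appears before $j$ in the one-line word $w(1)\cdots w(n)$). Since a total order on $[n]$ is determined by the word $w(1)\cdots w(n)$, this is the orientation obtained by ``reading off'' the linear order in which $w$ arranges the vertices.

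First I would check that $o_w$ is indeed acyclic: it is the orientation induced by the total order $w(1) \prec w(2) \prec \cdots \prec w(n)$, and any orientation of a graph induced by a total order on its vertices is transitive, hence acyclic. Next I would check surjectivity: given any acyclic orientation $o$ of $K_n$, the relation $i \to j$ is a tournament with no directed cycles, hence a total order (a tournament is transitive iff acyclic), so it equals the order induced by some word, which is $w(1)\cdots w(n)$ for a unique $w \in \S_n$; thus $o = o_w$. Injectivity follows because $w$ is recovered from $o_w$ as the unique topological order. So $w \mapsto o_w$ is a bijection $\S_n \to \ac(K_n)$.

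The remaining, and main, point is to match up the edge sets, i.e.\ to show $w, w'$ are adjacent in $\cay(\S_n)$ if and only if $o_w, o_{w'}$ differ in exactly one edge. If $w' = w s_i$ with $s_i = (i\ i{+}1)$ a simple transposition, then the one-line words of $w$ and $w'$ differ only by swapping the entries in positions $i$ and $i{+}1$; writing $a = w(i), b = w(i{+}1)$, the relative order of every pair of values is unchanged except for the pair $\{a,b\}$, whose orientation flips. Hence $o_w$ and $o_{ws_i}$ differ in exactly the edge $\{a,b\}$. Conversely, suppose $o_w$ and $o_{w'}$ differ in exactly one edge $\{a,b\}$. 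Both orientations are total orders on $[n]$, and they agree on all pairs except $\{a,b\}$; I claim $a$ and $b$ must be consecutive in the word of $w$. Indeed if some value $c$ lay strictly between $a$ and $b$ in the order of $w$ (say $a \prec_w c \prec_w b$), then in $o_{w'}$ we would need $b \prec_{w'} a$ while still $a \prec_{w'} c$ and $c \prec_{w'} b$ (those pairs are unchanged), forcing the cycle $a \to c \to b \to a$, contradicting acyclicity of $o_{w'}$. So $a,b$ occupy consecutive positions $i, i{+}1$ in $w$, and $w'$ is obtained by transposing them, i.e.\ $w' = w s_i$. Therefore the bijection is a graph isomorphism, which is the claim.

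The step I expect to require the most care is the converse direction of the edge-matching argument — ruling out that two orientations could differ in a single edge without the corresponding permutations being related by a simple transposition; the acyclicity of the orientations (transitivity of the induced tournament) is exactly what makes this work, and it is worth stating cleanly as a lemma about total orders differing in one comparison.
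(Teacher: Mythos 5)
Your proof is correct, but it takes a genuinely different route from the paper's. The paper identifies acyclic orientations of $K_n$ with regions of the braid arrangement (the orientation of edge $\overline{ij}$ records which side of the hyperplane $x_i=x_j$ the region lies on), notes that adjacency of regions matches adjacency of orientations, and then cites the standard fact that the graph of regions of a reflection arrangement is the Cayley graph of the reflection group. That argument is short and sets up the framework of graphical arrangements used in the rest of Section 6, but it leans on known machinery. You instead give a fully self-contained combinatorial isomorphism $w\mapsto o_w$, where $o_w$ is the transitive tournament induced by the one-line word of $w$, and verify both directions of the edge-matching by hand. Your treatment of the converse direction is the right place to be careful, and your argument there is sound: if two transitive tournaments on $[n]$ differ in exactly the edge $\{a,b\}$, then no $c$ can lie strictly between $a$ and $b$ in either order (else the unchanged comparisons with $c$ would force a directed $3$-cycle), so $a,b$ occupy consecutive positions and the permutations differ by right multiplication by a simple transposition --- matching the paper's (right) Cayley graph convention. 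The trade-off is that your proof is longer but elementary and citation-free, while the paper's proof generalizes immediately to arbitrary graphical arrangements, which is the context it is used in.
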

\begin{proof}
Acyclic orientations $o$ of $K_n$ correspond to regions $R$ of the \emph{braid arrangement} $\mathcal{H}$, the hyperplane arrangement with hyperplanes $x_i=x_j$ for $1 \leq i<j \leq n$, with the orientation of the edge $\overline{ij}$ in $o$ indicating on which side of the hyperplane $x_i=x_j$ the region $R$ lies.  Furthermore, two regions $R,R'$ are adjacent if and only if the corresponding orientations $o,o'$ are adjacent in $\ac(K_n)$.  Now, it is well known (see, e.g. \cite{Humphreys}) that the graph of regions of a reflection arrangement is naturally identified with the Cayley graph of the reflection group, in this case $\S_n$, with its system of simple generators.
\end{proof}

In light of Corollary~\ref{cor:typeA}, which establishes the hull property for $\cay(\S_n) \cong \ac(K_n)$, it is natural to ask: for which graphs $G$ does $\ac(G)$ have the hull property? Equivalently, for which graphical hyperplane arrangements does the graph of regions have the hull property? Theorem~\ref{thm:only-complete-graph-has-metric} shows that there are essentially no new cases; this is meant to illustrate that the hull property is very special.

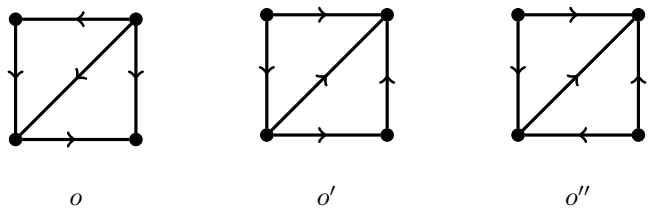
\begin{figure}[ht!]
\begin{center}
\begin{tikzpicture}[scale=.8]
\node[draw,shape=circle,fill=black,scale=0.5](a)[label=below: {}] at (0,0) {};
\node[draw,shape=circle,fill=black,scale=0.5](b)[label=below: {}] at (0,2) {};
\node[draw,shape=circle,fill=black,scale=0.5](c)[label=below: {}] at (2,0) {};
\node[draw,shape=circle,fill=black,scale=0.5](d)[label=below: {}] at (2,2) {};
\node at (1,-1) {$o$};

\begin{scope}[very thick,decoration={
    markings,
    mark=at position 0.5 with {\arrow{>}}}
    ] 
\draw[postaction={decorate}] (b)--(a);
\draw[postaction={decorate}] (a)->(c);
\draw[postaction={decorate}] (d)->(a);
\draw[postaction={decorate}] (d)->(c);
\draw[postaction={decorate}] (d)->(b);
\end{scope}
\end{tikzpicture}
\hspace{0.5in}
\begin{tikzpicture}[scale=.8]
\node[draw,shape=circle,fill=black,scale=0.5](a)[label=below: {}] at (0,0) {};
\node[draw,shape=circle,fill=black,scale=0.5](b)[label=below: {}] at (0,2) {};
\node[draw,shape=circle,fill=black,scale=0.5](c)[label=below: {}] at (2,0) {};
\node[draw,shape=circle,fill=black,scale=0.5](d)[label=below: {}] at (2,2) {};
\node at (1,-1) {$o'$};
\begin{scope}[very thick,decoration={
    markings,
    mark=at position 0.5 with {\arrow{>}}}
    ] 
\draw[postaction={decorate}] (b)--(a);
\draw[postaction={decorate}] (a)->(c);
\draw[postaction={decorate}] (a)->(d);
\draw[postaction={decorate}] (c)->(d);
\draw[postaction={decorate}] (b)->(d);
\end{scope}
\end{tikzpicture}
\hspace{0.5in}
\begin{tikzpicture}[scale=.8]
\node[draw,shape=circle,fill=black,scale=0.5](a)[label=below: {}] at (0,0) {};
\node[draw,shape=circle,fill=black,scale=0.5](b)[label=below: {}] at (0,2) {};
\node[draw,shape=circle,fill=black,scale=0.5](c)[label=below: {}] at (2,0) {};
\node[draw,shape=circle,fill=black,scale=0.5](d)[label=below: {}] at (2,2) {};
\node at (1,-1) {$o''$};

\begin{scope}[very thick,decoration={
    markings,
    mark=at position 0.5 with {\arrow{>}}}
    ] 
\draw[postaction={decorate}] (b)--(a);
\draw[postaction={decorate}] (c)->(a);
\draw[postaction={decorate}] (a)->(d);
\draw[postaction={decorate}] (c)->(d);
\draw[postaction={decorate}] (b)->(d);
\end{scope}
\end{tikzpicture}
\end{center}
\caption{Three orientations of $G_4$ which violate the triangle inequality in $\ac(G_4)$.  The sizes of the convex hulls of the pairs $(o,o'),(o',o''),$ and $(o,o'')$ are $4,2,$ and $9$, respectively; this violates the hull property.}
\label{fig:bad-graph-G_4}
\end{figure}

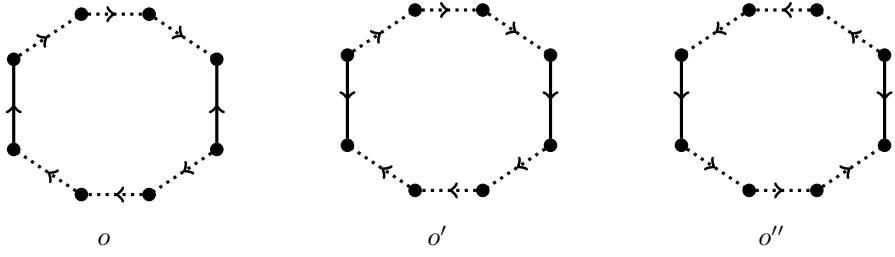
\begin{figure}[ht!]
\begin{center}
\begin{tikzpicture}[scale=.6]
\node[draw,shape=circle,fill=black,scale=0.5](a)[label=below: {}] at (0,0) {};
\node[draw,shape=circle,fill=black,scale=0.5](b)[label=below: {}] at (0,2) {};
\node[draw,shape=circle,fill=black,scale=0.5](c)[label=below: {}] at (1.5,3) {};
\node[draw,shape=circle,fill=black,scale=0.5](c2)[label=below: {}] at (3,3) {};
\node[draw,shape=circle,fill=black,scale=0.5](d)[label=below: {}] at (4.5,2) {};
\node[draw,shape=circle,fill=black,scale=0.5](e)[label=below: {}] at (4.5,0) {};
\node[draw,shape=circle,fill=black,scale=0.5](f2)[label=below: {}] at (1.5,-1) {};
\node[draw,shape=circle,fill=black,scale=0.5](f)[label=below: {}] at (3,-1) {};
\node at (2,-2) {$o$};

\begin{scope}[very thick,decoration={
    markings,
    mark=at position 0.5 with {\arrow{>}}}
    ] 
\draw[postaction={decorate}] (a)--(b);
\draw[postaction={decorate},dotted] (b)->(c);
\draw[postaction={decorate},dotted] (c)->(c2);
\draw[postaction={decorate},dotted] (c2)->(d);
\draw[postaction={decorate}] (e)->(d);
\draw[postaction={decorate},dotted] (e)->(f);
\draw[postaction={decorate},dotted] (f)->(f2);
\draw[postaction={decorate},dotted] (f2)->(a);
\end{scope}
\end{tikzpicture} \hspace{0.5in} \begin{tikzpicture}[scale=.6]
\node[draw,shape=circle,fill=black,scale=0.5](a)[label=below: {}] at (0,0) {};
\node[draw,shape=circle,fill=black,scale=0.5](b)[label=below: {}] at (0,2) {};
\node[draw,shape=circle,fill=black,scale=0.5](c)[label=below: {}] at (1.5,3) {};
\node[draw,shape=circle,fill=black,scale=0.5](c2)[label=below: {}] at (3,3) {};
\node[draw,shape=circle,fill=black,scale=0.5](d)[label=below: {}] at (4.5,2) {};
\node[draw,shape=circle,fill=black,scale=0.5](e)[label=below: {}] at (4.5,0) {};
\node[draw,shape=circle,fill=black,scale=0.5](f2)[label=below: {}] at (1.5,-1) {};
\node[draw,shape=circle,fill=black,scale=0.5](f)[label=below: {}] at (3,-1) {};
\node at (2,-2) {$o'$};

\begin{scope}[very thick,decoration={
    markings,
    mark=at position 0.5 with {\arrow{>}}}
    ] 
\draw[postaction={decorate}] (b)--(a);
\draw[postaction={decorate},dotted] (b)->(c);
\draw[postaction={decorate},dotted] (c)->(c2);
\draw[postaction={decorate},dotted] (c2)->(d);
\draw[postaction={decorate}] (d)->(e);
\draw[postaction={decorate},dotted] (e)->(f);
\draw[postaction={decorate},dotted] (f)->(f2);
\draw[postaction={decorate},dotted] (f2)->(a);
\end{scope}
\end{tikzpicture} \hspace{0.5in} \begin{tikzpicture}[scale=.6]
\node[draw,shape=circle,fill=black,scale=0.5](a)[label=below: {}] at (0,0) {};
\node[draw,shape=circle,fill=black,scale=0.5](b)[label=below: {}] at (0,2) {};
\node[draw,shape=circle,fill=black,scale=0.5](c)[label=below: {}] at (1.5,3) {};
\node[draw,shape=circle,fill=black,scale=0.5](c2)[label=below: {}] at (3,3) {};
\node[draw,shape=circle,fill=black,scale=0.5](d)[label=below: {}] at (4.5,2) {};
\node[draw,shape=circle,fill=black,scale=0.5](e)[label=below: {}] at (4.5,0) {};
\node[draw,shape=circle,fill=black,scale=0.5](f2)[label=below: {}] at (1.5,-1) {};
\node[draw,shape=circle,fill=black,scale=0.5](f)[label=below: {}] at (3,-1) {};
\node at (2,-2) {$o''$};

\begin{scope}[very thick,decoration={
    markings,
    mark=at position 0.5 with {\arrow{>}}}
    ] 
\draw[postaction={decorate}] (b)--(a);
\draw[postaction={decorate},dotted] (c)->(b);
\draw[postaction={decorate},dotted] (c2)->(c);
\draw[postaction={decorate},dotted] (d)->(c2);
\draw[postaction={decorate}] (d)->(e);
\draw[postaction={decorate},dotted] (f)->(e);
\draw[postaction={decorate},dotted] (f2)->(f);
\draw[postaction={decorate},dotted] (a)->(f2);
\end{scope}
\end{tikzpicture}

\end{center}
\caption{Three orientations of an $n$-cycle $C_n$ ($n \geq 4$) which violate the triangle inequality in $\ac(C_n)$; the sequences of dotted edges may be replace by any positive number of edges in the same direction.  The sizes of the convex hulls of the pairs $(o,o'),(o',o''),$ and $(o,o'')$ are $3,2^{n-2},$ and $2^n-2$, respectively; this violates the hull property.}
\label{fig:bad-cycles}
\end{figure}

\begin{theorem}
\label{thm:only-complete-graph-has-metric}
The graph $\ac(G)$ of acyclic orientations of $G$ has the hull property if and only if 
\[
\ac(G) \cong \ac(K_{n_1}) \times \cdots \times \ac(K_{n_k})
\]
for some $n_1, n_2, \ldots, n_k \in \{2,3,\ldots\}$.
\end{theorem}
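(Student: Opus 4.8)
The plan is to prove both directions. The easy direction ($\Leftarrow$): if $\ac(G)\cong\ac(K_{n_1})\times\cdots\times\ac(K_{n_k})$, then by Proposition~\ref{prop:weak-order-acyclic-orientations} each factor is $\cay(\S_{n_i})$, and the hull property is preserved under products. Indeed, in a Cartesian product of graphs, $d((u_1,\ldots,u_k),(v_1,\ldots,v_k))=\sum_i d(u_i,v_i)$, and shortest paths project to shortest paths in each factor, so $\conv$ of a pair factors as a product of the $\conv$'s in the factors; the hull property then follows coordinatewise from Corollary~\ref{cor:typeA}. (One should note $\ac(K_2)\cong\cay(\S_2)$ is a single edge, so this covers graphs that are disjoint unions-of-cliques-style too.)

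For the hard direction ($\Rightarrow$), the strategy is to show that if $G$ is not a disjoint union of complete graphs, then $\ac(G)$ fails the hull property, and then to identify exactly which such $G$ give the product form. First I would reduce to connected $G$: a disjoint union $G=G_1\sqcup G_2$ gives $\ac(G)\cong\ac(G_1)\times\ac(G_2)$, and the hull property passes to and from factors (a product has the hull property iff each factor does — the ``only if'' here because one can embed a factor isometrically as a convex subgraph by fixing the other coordinate). So assume $G$ is connected. The key structural claim is: a connected graph $G$ is complete if and only if it has no induced path on three vertices $P_3$. If $G$ is connected but not complete, it contains an induced $P_3$, and I would want to leverage this. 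The two figures in the excerpt handle the two ``minimal obstructions'': the graph $G_4$ (which is $K_4$ minus an edge, i.e.\ contains an induced $P_3$ but is otherwise dense) and the $n$-cycles $C_n$ for $n\geq 4$ (the sparse obstruction).

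The crux is thus a case analysis showing every connected non-complete $G$ contains, as an \emph{induced} subgraph whose orientations sit convexly inside $\ac(G)$, either a copy of $C_n$ ($n\ge4$) or a copy of $G_4 = K_4\setminus e$, and that a hull-property violation localized in an induced subgraph $H$ (with $\ac(H)$ a convex subgraph of $\ac(G)$, realized by fixing the orientations of all edges outside $H$) lifts to a violation in $\ac(G)$ — here one checks that restricting an acyclic orientation of $G$ to the edges of $H$ is a graph morphism $\ac(G)\to\ac(H)$ that is a retraction onto a convex subgraph, so convex hulls only grow, giving $|\conv_{\ac(G)}(o,o'')|\ge|\conv_{\ac(H)}(\bar o,\bar{o}'')|$ while $|\conv_{\ac(G)}(o,o')|=|\conv_{\ac(H)}(\bar o,\bar{o}')|$ when $o,o'$ differ only on edges of $H$. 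The main obstacle I expect is the graph-theoretic dichotomy: showing that ``connected, not complete, and not containing an induced $C_n$ for $n\ge4$'' forces the presence of an induced $K_4\setminus e$ in a position where its orientations extend to a convex family. A clean way: take an induced $P_3$, say $a-b-c$ with $a\not\sim c$; if $a,c$ have a common neighbor $d\neq b$ we get an induced $C_4$ (if $d\not\sim b$) or an induced $K_4\setminus e$ (if $d\sim b$); if not, connectivity gives a longer induced path closing into an induced cycle $C_n$, $n\ge5$, or one forces $K_4\setminus e$ along the way — this needs care but is the standard ``chordal-like'' argument.

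Finally, once $G$ is shown to be a disjoint union of complete graphs $K_{n_1},\ldots,K_{n_k}$, the product decomposition $\ac(G)\cong\ac(K_{n_1})\times\cdots\times\ac(K_{n_k})$ is immediate from the disjoint-union-of-graphs $\Rightarrow$ product-of-$\ac$ observation above, and (discarding isolated vertices, for which $\ac(K_1)$ is a point) we may take all $n_i\ge2$. This closes the equivalence. I would present the argument in the order: (1) products and disjoint unions, giving $\Leftarrow$ and the reduction to connected $G$; (2) the retraction lemma showing violations lift from induced subgraphs; (3) explicit verification of the violations in Figures~\ref{fig:bad-graph-G_4} and~\ref{fig:bad-cycles} (the convex-hull sizes $4,2,9$ and $3,2^{n-2},2^n-2$ are routine once one uses Tits-type convexity for $\ac(G)$, i.e.\ an orientation lies in $\conv(o,o')$ iff it agrees with both on every edge where $o,o'$ agree); (4) the graph dichotomy; (5) assembling the product form.
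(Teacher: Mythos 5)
Your easy direction, your retraction/induced-subgraph lemma, and your verification of the two bad families ($G_4=K_4\setminus e$ and $C_n$, $n\ge 4$) all match the paper. The genuine gap is in the structural dichotomy at the heart of the hard direction. You claim that every connected non-complete graph contains an induced $C_n$ ($n\ge4$) or an induced $K_4\setminus e$, and, correspondingly, that the graphs satisfying the theorem are exactly the disjoint unions of complete graphs. Both claims are false: any tree on at least three vertices, or two triangles glued at a single vertex, is connected and non-complete yet contains neither forbidden induced subgraph --- and such graphs \emph{do} have the hull property (e.g.\ $\ac(P_3)\cong\ac(K_2)\times\ac(K_2)$ is a $4$-cycle, and for two triangles sharing a vertex $\ac(G)\cong\ac(K_3)\times\ac(K_3)$). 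So your argument, if completed as outlined, would ``prove'' that these graphs violate the hull property, contradicting the theorem itself. Note that the theorem's conclusion is an isomorphism of $\ac(G)$ with a product of $\ac(K_{n_i})$'s, not the statement that $G$ is itself a disjoint union of cliques.

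The missing ingredient is the cut-vertex (block) decomposition used in the paper: if $v$ is a cut vertex of $H$ and the $H_i$ are the components of $H\setminus v$, then $\ac(H)\cong\prod_i\ac(H_i\cup\{v\})$. Together with the disjoint-union case this reduces the problem to $2$-connected graphs, and the correct characterization is that every \emph{block} of $G$ is complete (so all trees qualify, for instance). The $2$-connectivity is then used essentially in the final step: given non-adjacent $v,v'$ at distance two, $2$-connectivity supplies a cycle of length at least four through both with a common neighbor $u$, and the absence of induced $C_n$ forces a triangulating chord producing an induced $K_4\setminus e$. Your proposed ``connectivity gives a longer induced path closing into an induced cycle'' step has no analogue for trees, which is exactly where your reduction to merely connected graphs breaks down.
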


\begin{proof}
If $\ac(G) \cong \ac(K_{n_1}) \times \cdots \times \ac(K_{n_k})$ it follows from Theorem~\ref{thm:typeA} and Proposition~\ref{prop:weak-order-acyclic-orientations} that $\ac(G)$ has the hull property, since the property is closed under taking direct products.

For the other direction, note first that if $\ac(G)$ has the hull property, then so does $\ac(H)$ for any induced subgraph $H$ of $G$.  Indeed in this case $\ac(H)$ is an induced subgraph of $\ac(G)$, as can be seen by fixing an acyclic orientation of $G \setminus H$ and directing all edges between $H$ and $G \setminus H$ away from $H$; then, varying the orientation only on $H$ gives an embedding $\ac(H) \hookrightarrow \ac(G)$ as an induced subgraph.  Now, if $\ac(H)$ did not have the hull property, with some orientations $o,o',o''$ violating the triangle inequality, pushing these forward to acyclic orientations of $G$ would violate the triangle inequality for $\ac(G)$.  Thus the hull property on $\ac(G)$ is closed under taking induced subgraphs of $G$.

Given orientations $o,o' \in \ac(G)$, it is not hard to see that 
\[
\conv(o,o')=\{o'' \in \ac(G) \: | \: \text{whenever $o,o'$ agree on an edge of $G$, so does $o''$} \}.
\]
Using this, it is easy to check the examples in Figures~\ref{fig:bad-graph-G_4} and \ref{fig:bad-cycles}.  These show that the graph $G_4$ obtained by gluing two triangles along an edge and the cycle graphs $C_n$ ($n \geq 4$) are \emph{bad}, where we say $H$ is bad if $\ac(H)$ does not possess the hull property.  By the previous paragraph, this means that any graph $G$ containing $G_4$ or $C_n$ as an induced subgraph is also bad.  If $H$ is not bad, we say it is $\emph{good}$.

If $H$ is disconnected, then $\ac(H) = \prod_i \ac(H_i)$ where the $H_i$ are the connected components, so $H$ is good if and only if all of the $H_i$ are good.  If $H$ has a cut vertex $v$, and the $H_i$ are the connected components of $H \setminus v$, then 
\[
\ac(H) \cong \prod_i \ac(H_i \cup \{v\}).
\]
Therefore it suffices to classify good 2-connected graphs $H$, and in particular to show that any 2-connected graph $H$ which does not contain an induced copy of $G_4$ or $C_n$ ($n \geq 4$) is a complete graph, and thus good by Theorem~\ref{thm:typeA}.

Let $H$ be a 2-connected graph $H$ which does not contain an induced copy of $G_4$ or $C_n$ ($n \geq 4$).  Any 2-connected graph on fewer than four vertices is a complete graph, so suppose $H$ has $N \geq 4$ vertices.  If $H$ is not a complete graph, then there exist two vertices $v,v'$ whose path distance from each other is exactly two.  Since $H$ is 2-connected, there is a cycle $C$ on at least four elements containing both $v,v'$, and such that $v,v'$ share a common neighbor $u \in C$.  As $H$ avoids induced copies of $C_n$ for $n \geq 4$, there must exist chords of $C$ in $H$ which triangulate $C$.  Since $\overline{vv'}$ is not a edge, one of these chords must yield a 4-cycle $v,u,v',w$ for some $w$ adjacent to $v,v'$, and furthermore the diagonal $\overline{uw}$ must be an edge of $H$.  But this creates an induced $G_4$ in $H$, a contradiction.
\end{proof}

\begin{remark}
It would be interesting to investigate the prevalence of the hull property within other families of graphs. The anonymous referee intriguingly notes that among the antipodal
isometric subgraphs of hypercubes that they were able to check, the only examples which satisfied the hull property were Cayley graphs of Coxeter groups. Another referee also suggests the case of (properly defined) acyclic orientations of signed graphs, which could generalize our type $B$ result.
\end{remark}

\section*{Acknowledgements}
We are grateful to Alex Postnikov, our advisor, for pointing out that the hull property could be interpreted in terms of the triangle inequality for a metric. We also thank Vic Reiner and Igor Pak for their comments.

\bibliographystyle{plain}
\bibliography{arxiv-v2}

\begin{thebibliography}{10}

\bibitem{Bjorner-survey}
Anders Bj\"{o}rner.
\newblock Orderings of {C}oxeter groups.
\newblock In {\em Combinatorics and algebra ({B}oulder, {C}olo., 1983)},
  volume~34 of {\em Contemp. Math.}, pages 175--195. Amer. Math. Soc.,
  Providence, RI, 1984.

\bibitem{Bjorner-Brenti}
Anders Bj\"{o}rner and Francesco Brenti.
\newblock {\em Combinatorics of {C}oxeter groups}, volume 231 of {\em Graduate
  Texts in Mathematics}.
\newblock Springer, New York, 2005.

\bibitem{Bjorner-Wachs-gen-quo}
Anders Bj\"{o}rner and Michelle~L. Wachs.
\newblock Generalized quotients in {C}oxeter groups.
\newblock {\em Trans. Amer. Math. Soc.}, 308(1):1--37, 1988.

\bibitem{BW-lin-ext}
Anders Bj\"{o}rner and Michelle~L. Wachs.
\newblock Permutation statistics and linear extensions of posets.
\newblock {\em J. Combin. Theory Ser. A}, 58(1):85--114, 1991.

\bibitem{BBS}
B\'{e}la Bollob\'{a}s, Graham Brightwell, and Alexander Sidorenko.
\newblock Geometrical techniques for estimating numbers of linear extensions.
\newblock {\em European J. Combin.}, 20(5):329--335, 1999.

\bibitem{Brink}
Brigitte Brink.
\newblock On centralizers of reflections in {C}oxeter groups.
\newblock {\em Bull. London Math. Soc.}, 28(5):465--470, 1996.

\bibitem{Chan-Pak-Panova-alternative}
Swee~Hong Chan, Igor Pak, and Greta Panova.
\newblock Effective poset inequalities, 2022.
\newblock arXiv:2205.02798 [math.CO].

\bibitem{pallavi}
Pallavi Dani.
\newblock The large-scale geometry of right-angled {C}oxeter groups.
\newblock In {\em Handbook of group actions. {V}}, volume~48 of {\em Adv. Lect.
  Math. (ALM)}, pages 107--141. Int. Press, Somerville, MA, [2020] \copyright
  2020.

\bibitem{gaetz2020balance}
Christian Gaetz and Yibo Gao.
\newblock Balance constants for {C}oxeter groups.
\newblock 2020.
\newblock arXiv:2005.09719 [math.CO].

\bibitem{Second-separable-paper}
Christian Gaetz and Yibo Gao.
\newblock Separable elements and splittings of {W}eyl groups.
\newblock {\em Adv. Math.}, 374:107389, 28, 2020.

\bibitem{First-separable-paper}
Christian Gaetz and Yibo Gao.
\newblock Separable elements in {W}eyl groups.
\newblock {\em Advances in Applied Mathematics}, 113:101974, 2020.

\bibitem{Humphreys}
James~E. Humphreys.
\newblock {\em Introduction to {L}ie algebras and representation theory}.
\newblock Springer-Verlag, New York-Berlin, 1972.
\newblock Graduate Texts in Mathematics, Vol. 9.

\bibitem{Morales-Pak-Panova}
Alejandro~H. Morales, Igor Pak, and Greta Panova.
\newblock Why is pi less than twice phi?
\newblock {\em Amer. Math. Monthly}, 125(8):715--723, 2018.

\bibitem{Reiner-signed-posets}
Victor Reiner.
\newblock Signed posets.
\newblock {\em J. Combin. Theory Ser. A}, 62(2):324--360, 1993.

\bibitem{Schutzenberger}
M.~P. Sch\"{u}tzenberger.
\newblock Promotion des morphismes d'ensembles ordonn\'{e}s.
\newblock {\em Discrete Math.}, 2:73--94, 1972.

\bibitem{Sidorenko}
A.~Sidorenko.
\newblock Inequalities for the number of linear extensions.
\newblock {\em Order}, 8(4):331--340, 1991/92.

\bibitem{ec1}
Richard~P. Stanley.
\newblock {\em Enumerative {C}ombinatorics. {V}olume 1}, volume~49 of {\em
  Cambridge Studies in Advanced Mathematics}.
\newblock Cambridge University Press, Cambridge, second edition, 2012.

\bibitem{Tits}
Jacques Tits.
\newblock {\em Buildings of spherical type and finite {BN}-pairs}.
\newblock Lecture Notes in Mathematics, Vol. 386. Springer-Verlag, Berlin-New
  York, 1974.

\end{thebibliography}
\end{document}